\theoremstyle{plain}
\newtheorem{theorem}{Theorem}[section]
\newtheorem{lemma}[theorem]{Lemma}
\newtheorem{proposition}[theorem]{Proposition}
\newtheorem{corollary}[theorem]{Corollary}
\newtheorem{algorithm}[theorem]{Algorithm}
\theoremstyle{definition}
\newtheorem{definition}[theorem]{Definition}
\newtheorem{remark}[theorem]{Remark}
\newtheorem{question}[theorem]{Question}
\newtheorem{example}[theorem]{Example}
\newtheorem{conjecture}[theorem]{Conjecture}
\newtheorem*{notation}{Notation}
\newtheorem{counterexample}[theorem]{Counterexample}
\newcommand{\todo}[1]{\vspace{5 mm}\par \noindent
\marginpar{\textsc{ToDo}}
\framebox{\begin{minipage}[c]{0.95 \textwidth}
\tt #1 \end{minipage}}\vspace{5 mm}\par}
\renewcommand{\todo}[1]{}
\newcommand{\idiot}[1]{\vspace{5 mm}\par \noindent
\framebox{\begin{minipage}[c]{0.95 \textwidth}
\tt #1 \end{minipage}}\vspace{5 mm}\par}
\renewcommand{\idiot}[1]{}
\newcommand{\xs}{x_1,\ldots,x_n}                
 \newcommand{\I}{{\mathcal{I}}}                  
\newcommand{\erase}[1]{}
\newcommand{\lm}{\lambda}
\newcommand{\Il}{{\mathcal{I}}_{\lambda}}
\newcommand{\Jl}{{\mathcal{J}}_{\lambda}}
\newcommand{\Ml}{{\mathcal{M}}_{\lambda}}
\newcommand{\El}{{\mathcal{E}}_{\lambda}}
\newcommand{\Kl}{{\mathcal{K}}_{\lambda}}
\newcommand{\J}{{\mathcal{J}}}
\newcommand{\E}{{\mathcal{E}}}
\newcommand{\mystack}[2]{\stackrel{\scriptscriptstyle{#1}}{\scriptscriptstyle{#2}}}
\newcommand{\mysumstack}[2]{\stackrel{\scriptstyle{#1}}{\scriptstyle{#2}}}
\definecolor{shade}{gray}{0.75}
\newcommand{\collbox}[1]{\framebox{$#1$}}
\newcommand{\collshadebox}[1]{\let\fboxsave\fboxsep\def\fboxsep{0mm}{\framebox{\let\fboxsep\fboxsave\colorbox{shade}{$#1$}}}}
\newcommand{\shadebox}[1]{\colorbox{shade}{$#1$}}
\def\Young#1{\vbox{\smallskip\offinterlineskip
    \halign{&\vbox{##}\kern-\Thickness\cr #1}}}
\newdimen\Squaresize \Squaresize=12pt
\newdimen\Thickness \Thickness=.3pt
\newdimen\Correction \Correction=7pt
\def\Vide#1{\hbox{
       \vbox to \Squaresize{\vss
          \hbox to \Squaresize{\hss#1 \hss}\vss}
    \hskip-\Correction}
   \kern-\Thickness}
\def\Carre#1{\hbox{\vrule width \Thickness
   \vbox to \Squaresize{\hrule height \Thickness\vss
      \hbox to \Squaresize{\hss#1\hss}
   \vss\hrule height\Thickness}
   \unskip\vrule width \Thickness}
   \kern-\Thickness}
\def\Box#1{\Carre{$\scriptstyle#1$}}
 \date{\today}
\author{Riccardo Biagioli\thanks{Institut Camille Jordan, UMR 5208 du CNRS, Universit\'e
de Lyon, Universit\'e Lyon 1,  biagioli@math.univ-lyon1.fr} \and Sara
Faridi\thanks{Department of Mathematics, Dalhousie University,
Halifax, Canada, faridi@mathstat.dal.ca (research supported by NSERC)}
\and Mercedes Rosas\thanks{Departamento de \'Algebra, Universidad de
Sevilla, mrosas@us.es (research supported by a Ram\'on y Cajal
grant, MEC)}}
\title{The defining ideals of conjugacy classes of nilpotent matrices and
a conjecture of Weyman}
\begin{document}

\maketitle

\begin{abstract} Tanisaki introduced generating sets  
 for the defining ideals of the schematic intersections of the closure
 of conjugacy classes of nilpotent matrices with the set of diagonal
 matrices. These ideals are naturally labeled by integer partitions.
 Given such a partition $\lambda$, we define several methods to
 produce a reduced generating set for the associated ideal $\Il$.  For
 particular shapes we find nice generating sets.  By
 comparing our sets with some generating sets of $\Il$ arising from a
 work of Weyman, we find a counterexample to a related conjecture of
 Weyman.
\end{abstract}

\section{Introduction}

\idiot{Convention : the partition $\lambda'$ gives the size of the
blocks, and that the set of matrices with Jordan canonical form
$\lambda$ is denoted by $O(\lambda')$, and its ideal by
$\I_{\lambda'}$.  Weyman follows the opposite convention for the ideals
but the same for the varieties.  Haiman follows our convention}

Let $X$ be the set of $n\times n$ matrices over a field $k$ of
characteristic $0$.  In his paper Kostant~\cite{K} showed that the
ideal of polynomial functions vanishing on the set of nilpotent
matrices in $X$, is given by the invariants of the action by
conjugation of $GL(n)$ on $X$.  Let $C_{\lambda}$ be the conjugacy
class of nilpotent matrices in $X$ having Jordan block sizes
$\lambda'_1, \ldots , \lambda'_h$, with $\lambda$ a
partition of $n$ and $\lambda'$ its transpose.  Let
$\overline{C}_{\lambda}$ be the nilpotent orbit variety defined as the
Zariski closure of $C_{\lambda}$. De Concini and Procesi~\cite{DP}
asked for a description of the ideal $\Jl$ of polynomial functions
vanishing on $C_{\lambda}$, for a general partition $\lambda$. They
were interested in a refinement of Kostant's result, which corresponds
to the case $\lambda=(1^n)$.  De Concini and Procesi described a set
of elements of $\Jl$ that they conjectured to be a generating set.
Later, Tanisaki~\cite{T} conjectured a simpler generating set, and
Eisenbud and Saltman \cite{ES} generalized Tanisaki's conjecture to
rank varieties. Finally, in 1989 Weyman \cite{W1} used geometric
methods to show that the three conjectures hold, and conjectured a
minimal generating set $\mathcal{W}_\lambda$ for these ideals.

 In the present paper we focus on a related family of ideals that we
 denote by $\I_{\lambda}$ and call \emph{De Concini-Procesi ideals}.  These
 are the ideals of the scheme-theoretic intersection of nilpotent
 orbit varieties $\overline{C}_{\lambda}$ with the set of diagonal
 matrices.  De Concini and Procesi \cite{DP} produced a set of
 generators for these ideals that was later simplified by
 Tanisaki~\cite{T}. In both cases, the sets of generators are highly
 nonminimal.  In the case $\lambda=(1^n)$, Kostant's theorem implies
 that the elementary symmetric functions of the eigenvalues of the
 matrices give a minimal set of generators for $\I_{(1^n)}$.

Our work in this paper is motivated by the search for a minimal
generating set for De Concini--Procesi ideals. To this end, we
simplify the generating set described by Tanisaki using elementary
facts of the theory of symmetric functions.  We provide several
reduction methods. The obtained sets are minimal in special cases, and
are generally much smaller.  The main tool we use is a special filling
of the Young diagram of the partition $\lambda$ which we call the
\emph{regular filling}.

Clearly, by adding the defining ideal of the diagonal matrices to any
generating set for the ideal $\J_\lambda$, we obtain a generating set
for $\Il$. The following question is natural: Is it true that, after
adding these generators to Weyman's conjectured minimal generating set
for $\Jl$, a minimal generating set for $\Il$ is obtained ? We give a
negative answer to this question and provide some infinite families
of counterexamples.  With the help of Macaulay 2 we verify that one of
these counterexamples is also a counterexample to the original
conjecture of Weyman on a minimal generating set of $\Jl$. This has
been a well studied problem that has been open for the past seventeen
years. We hope that our methods together with those of Weyman will
eventually lead to a complete solution of the problem of finding a
minimal generating set for both ideals $\Il$ and $\Jl$.

 Our paper is organized as follows. In Section~\ref{s:basictools} we
 introduce some basic tools from the theory of symmetric functions.
 In Section~\ref{s:dp-ideals}, we introduced Tanisaki's generating set
 for the De Concini-Procesi ideal, and derive a simple combinatorial
 description for it. This leads to a simple rule to read a set of
 generators of the ideal directly from a special filling of the Young
 diagram of the partition that call the \emph{regular filling}.  In
 Section~\ref{s:top-cells} we show that only generators read from the
 top entries of the regular filling are necessary in order to
 construct a generating set for $\Il$. The resulting generating set is
 in a one-to-one correspondence with a generating set that arises from
 the work of Weyman~\cite{W1}.  In the case where the partition
 $\lambda$ is a hook, our result coincides with the minimal generating
 set we introduced in \cite{BFR}. For a general shape though, this
 generating set could be far from minimal.  In
 Section~\ref{s:towardsU} we reduce the number of generators coming
 from each column of the Young diagram. Finally in
 Section~\ref{ultima}, we provide many examples and counterexamples to
 the modified version of Weyman's conjecture, and discuss classes
 where our reductions work best. Inside those families we are able to
 find a counterexample to the original conjecture of Weyman on a
 minimal generating set for the ideal $\J_\lambda$. Throughout the
 paper, we raise new questions whose answers could help illuminate the
 problem of finding minimal generating sets for $\Il$ and $\Jl$.

\section{Basic Tools} \label{s:basictools}

We will be working in the polynomial ring $R=k[\xs]$, where $k$ may
be an arbitrary field of characteristic $0$.

We define a {\em partition} of $n \in \mathbb{N}$ to be a finite
sequence $\lambda=(\lambda_1,\ldots,\lambda_k) \in \mathbb{N}^k$, such
that $\sum_{i=1}^k \lambda_i =n$ and $\lambda_1 \geq \ldots \geq
\lambda_k$. If $\lambda$ is a partition of $n$ we write $\lambda
\vdash n$. The nonzero terms $\lambda_i$ are called {\em parts} of
$\lambda$. The number of parts of $\lambda$ is called the {\em length}
of $\lambda$, denoted by $\ell(\lambda)$, so $\lm_i=0$ if $i>\ell(\lm)$.

Let $\lambda=(\lambda_1,\ldots,\lambda_k)$ be a partition of $n$.  The
{\em Young diagram} of a partition $\lambda$ is the left-justified array with
$\lambda_i$ squares in the $i$-th row, from bottom to top. We use the symbol
$\lambda$ for both a partition and its associated Young diagram. For
example, the diagram of $\lambda=(4,4,2,1)$ is illustrated in
Figure~\ref{Ytableau} on the left.

For a partition $\lambda=(\lambda_1,\ldots,\lambda_k)$ we define its
{\em conjugate} partition as $\lambda^\prime=(\lambda_1^\prime,\ldots,
\lambda_h^\prime)$, where for each $i\geq 1$, $\lambda_i^\prime$ is
the number of parts of $\lambda$ that are bigger than or equal to $i$.
The diagram of $\lambda^\prime$ is obtained by flipping the diagram of
$\lambda $ across the diagonal.
{\newdimen\Squaresize \Squaresize=10pt
\newdimen\Thickness \Thickness=.3pt
\newdimen\Correction \Correction=7pt
\begin{figure}[h]
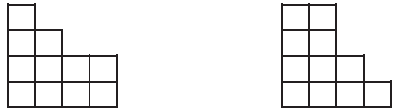

\begin{eqnarray*}
&\begin{matrix} \Young{ \Box{}\cr \Box{}& \Box{} \cr \Box{}& \Box{}&
               \Box{}& \Box{}\cr \Box{}& \Box{}& \Box{}& \Box{}&
                \cr }
     \end{matrix} \;\;\;\;\;\;\;\;\;\;\;\;\;\;\;\;\;\;\;
 &\begin{matrix} 
 \Young{ \Box{} & \Box{}\cr 
                \Box{}& \Box{} \cr 
                \Box{}& \Box{}&
               \Box{}&\cr \Box{}& \Box{}& \Box{}& \Box{}&
                \cr }
     \end{matrix}
 \end{eqnarray*}
\caption{The partition $\lambda=(4,4,2,1)$ and its conjugate 
     $\lambda^\prime=(4,3,2,2)$.}\label{Ytableau}
     
\end{figure}

We shall need some basic definitions from the theory of symmetric
functions. First, we introduce the generating series for the
elementary and the complete symmetric polynomials (denoted
respectively by $E(S,z)$ and $H(S,z)$).  These series are defined as:
\begin{align}\label{e:generating-functions}
&&E(S,z)= \sum_{i \ge 0} z^i e_i(S)= \prod_{a \in S} (1+za),  &&\text{ and }
 && H(S,z)= \sum_{i \ge 0} z^i h_i(S)= \prod_{a \in S} \frac{1}{1-za} ,
\end{align}
where $S$ is a set of variables, and $z$ is a formal variable.
Therefore, the {\em elementary symmetric polynomial} $e_r(S)$ is the
sum of all square free monomials of degree $r$ in the variables of
$S$, and the {\em complete symmetric polynomial} $h_r(S)$ is the sum
of all monomials of degree $r$ in the variables of $S$.

In order to introduce the monomial symmetric polynomials
$m_{\lambda}(S)$, we say that a monomial ${\bf x}^s=x_1^{s_1} x_2^{s_2}
\cdots x_n^{s_n}$ has {\em type} $\lambda$, if the partition $\lambda$
is obtained by rearranging the sequence $(s_1, s_2, \ldots, s_n)$ in
weakly descending order.  Given a partition $\lambda$, the {\em
monomial symmetric polynomial} $m_{\lambda}=m_\lambda(S)$ is defined
as
\[ m_{\lambda}(S) = \sum \mathbf{x}^{s} \]
where the sum is taken over all different monomials $\mathbf{x}^{s}$ of type $\lambda$ and with all variables in $S$.

If $f \in k[\xs]$ is a symmetric polynomial, and $S \subseteq
\{x_1,\ldots,x_n\}$, we define $f(S)$ as the evaluation of $f$ at the
set $S$, by setting all variables $x \in \{x_1,\ldots,x_n\} \setminus
S$ to be equal to $0$ in $f$. For instance, $e_2(x_1, x_3)=x_1x_3$.
The polynomial $f(S)$ is called a {\em partially symmetric
  polynomial}. In general, it is no longer invariant under the action
of the symmetric group on $n$ letters.

For simplicity, given a symmetric polynomial $f \in k[\xs]$, for all
$1 \leq k \leq n$, we will denote by $f(k)$ the following set of
partially symmetric polynomials,
\[
f(k)=\{ f(S) \mid S \subseteq \{\xs\}, \ |S|=k \}.
\]
For example, let $n=4$, then
$e_2(3)=\{x_1x_2 + x_1x_3 + x_2x_3, \ x_1x_2 + x_1x_4 + x_2x_4, \
x_1x_3 + x_1x_4 + x_3x_4, \ x_2x_3 + x_2x_4 + x_3x_4\}.$
Note that if $r>k$ we have $e_r(k)=\emptyset$. 
 
\begin{notation}
Let $S \subseteq \{\xs\}$. For $x \in S$, and
$I=\{x_{i_1},\ldots,x_{i_k}\}\subseteq S$, we let
\begin{align*} 
S_x &= S \setminus \{x\} \ \ {\rm and} \ \  S_{i_1,\ldots,i_k}= S \setminus I.
\end{align*}
\end{notation}

We shall be using the following elementary lemma later in the paper.

\begin{lemma}[Basic Lemma]\label{l:basic-lemma} Let $S \subseteq 
\{\xs\}$, $|S|=s$, and let $j\leq s$.  Then
\begin{enumerate}
\item  $e_j(S)=e_j(S_x)+xe_{j-1}(S_x)$ for all
 $x \in S$;

\item  $\displaystyle \sum_{x\in S}e_j(S_x)=(s-j)e_j(S)$;

\item  $\displaystyle \sum_{x\in S}xe_{j-1}(S_x)=je_j(S)$.

\end{enumerate}
\end{lemma}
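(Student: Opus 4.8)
The plan is to prove part (1) directly from the generating-function identity, then derive parts (2) and (3) by summing (1) over all $x \in S$ and matching degrees.

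For part (1), I would start from the factorization $E(S,z) = \prod_{a \in S}(1+za)$ given in~\eqref{e:generating-functions}. Fixing $x \in S$, I split off the corresponding factor: $E(S,z) = (1+zx)\,E(S_x,z)$. Expanding both sides as power series in $z$ and equating the coefficient of $z^j$ gives $e_j(S) = e_j(S_x) + x\,e_{j-1}(S_x)$, which is exactly~(1). (Alternatively, one argues combinatorially: a squarefree degree-$j$ monomial in the variables of $S$ either avoids $x$, contributing to $e_j(S_x)$, or is $x$ times a squarefree degree-$(j-1)$ monomial in $S_x$, contributing to $x\,e_{j-1}(S_x)$; the two cases are disjoint and exhaustive.)

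For parts (2) and (3), I would sum the identity in~(1) over all $x \in S$. Summing the left-hand side gives $\sum_{x \in S} e_j(S) = s\,e_j(S)$ since there are $s = |S|$ terms. So it suffices to compute one of the two sums $\sum_{x \in S} e_j(S_x)$ or $\sum_{x \in S} x\,e_{j-1}(S_x)$, as the other follows by subtraction from $s\,e_j(S)$. For~(3), I observe that in the expansion $\sum_{x \in S} x\,e_{j-1}(S_x)$, each squarefree degree-$j$ monomial $x_{i_1}\cdots x_{i_j}$ of $e_j(S)$ arises exactly $j$ times — once for each choice of which of its $j$ variables plays the role of the distinguished $x$ (the remaining $j-1$ variables then lie in $S_x$ and form a term of $e_{j-1}(S_x)$). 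Hence $\sum_{x \in S} x\,e_{j-1}(S_x) = j\,e_j(S)$, proving~(3). Then~(2) follows immediately: $\sum_{x \in S} e_j(S_x) = s\,e_j(S) - \sum_{x \in S} x\,e_{j-1}(S_x) = s\,e_j(S) - j\,e_j(S) = (s-j)\,e_j(S)$.

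There is no real obstacle here; the only point requiring a little care is the bookkeeping in the counting argument for~(3) — making sure each monomial of $e_j(S)$ is counted with the correct multiplicity $j$ and that terms with $x \notin$ the monomial contribute nothing. One should also note the hypothesis $j \le s$ is what keeps all the symmetric polynomials involved nonzero (for $j > s$ every term vanishes and the identities hold trivially), so the statement is not vacuous. I would present part~(1) via the generating function and parts~(2)–(3) via the summation, which keeps the proof short and self-contained.
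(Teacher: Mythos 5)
Your proof is correct and takes essentially the same approach as the paper: part (1) by splitting off the distinguished variable $x$, one of (2)--(3) by a direct count of the multiplicity of each squarefree monomial, and the remaining part by summing the identity in (1) over $x\in S$ and subtracting. The only (immaterial) difference is that you count (3) directly and deduce (2), while the paper counts (2) directly (each monomial of $e_j(S)$ occurs in exactly $s-j$ of the $e_j(S_x)$) and deduces (3).
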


\begin{proof}  
\begin{enumerate}
\item Clear.

\item Fix a square-free monomial $M$ of degree $j$ appearing in
$e_j(S)$. Without loss of generality, assume $M=x_1 \cdots x_j$ and
$S=\{x_1,\ldots,x_s\}$.  Then each $e_j(S_{x_t})$ contains exactly one
copy of $M$, for $t=j+1,\ldots,s$. There are exactly $s-j$ such
indices $t$, so $M$ appears $s-j$ times in the left-hand sum.

\item  We use the equation in Part 1, and sum over all elements of $S$ :
$\sum_{x\in S}e_j(S)=\sum_{x\in S}e_j(S_x)+\sum_{x\in
S}xe_{j-1}(S_x)$ so by Part 2 we have 
$se_j(S)=(s-j)e_j(S)+\sum_{x\in S}xe_{j-1}(S_x)$ and hence 
$je_j(S)=\sum_{x\in S}xe_{j-1}(S_x).$

\end{enumerate}
\end{proof}

\begin{proposition}[Another presentation of the partially symmetric polynomials]\label{p:new-presentation} 
Let $S=\{\xs\}$, $i\leq n$, and define the ideal
$\E_i(S)=(e_1(S),\ldots,e_i(S))$ in the polynomial ring $k[\xs]$.  Let
$U \subseteq S$ be a subset of cardinality $u$. Then for $i \leq n-u$
we have
\begin{equation}\label{RHS22}
e_i(S \setminus U)= (-1)^i h_i(U)  \mbox{ mod } \E_i(S).
\end{equation}
\end{proposition}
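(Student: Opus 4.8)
The plan is to derive \eqref{RHS22} from the multiplicativity of the elementary generating function $E(S,z)=\prod_{a\in S}(1+za)$, together with the fact that modulo $\E_i(S)$ this series agrees with $1$ up to and including the coefficient of $z^i$. Since $S$ is the disjoint union of $U$ and $S\setminus U$, the product formula \eqref{e:generating-functions} gives $E(S,z)=E(U,z)\,E(S\setminus U,z)$, and likewise $E(U,z)\,H(U,-z)=\prod_{a\in U}(1+za)\cdot\prod_{a\in U}(1+za)^{-1}=1$. Multiplying the first relation by $H(U,-z)$ and using the second, I obtain the key identity
\[
E(S\setminus U,z)=E(S,z)\,H(U,-z)
\]
in $k[\xs][[z]]$; note that the left-hand side is in fact a polynomial in $z$ of degree $n-u$.

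I would then compare the coefficients of $z^i$ on the two sides. On the left this coefficient is $e_i(S\setminus U)$, which, since $i\le n-u$, is the genuine $i$-th elementary symmetric polynomial of the $n-u$ variables in $S\setminus U$. On the right, expanding both factors via \eqref{e:generating-functions}, the coefficient of $z^i$ equals $\sum_{k=0}^{i}(-1)^{i-k}e_k(S)\,h_{i-k}(U)$, so we get the polynomial identity
\[
e_i(S\setminus U)=\sum_{k=0}^{i}(-1)^{i-k}e_k(S)\,h_{i-k}(U).
\]
Passing to the quotient ring $k[\xs]/\E_i(S)$ annihilates every summand with $1\le k\le i$, because $e_1(S),\dots,e_i(S)\in\E_i(S)$; what survives is the single term $k=0$, namely $(-1)^i e_0(S)h_i(U)=(-1)^i h_i(U)$. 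This is precisely the asserted congruence \eqref{RHS22}.

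The whole argument is bookkeeping, so I do not expect a genuine obstacle; the one step deserving a sentence of care is that the power-series identity $E(S\setminus U,z)=E(S,z)H(U,-z)$ is legitimate even though $H(U,-z)$ is not a polynomial — this holds because $E(U,z)$ has constant term $1$ and is therefore a unit in $k[\xs][[z]]$ with inverse exactly $H(U,-z)$. A reader preferring to avoid formal power series can instead prove the displayed identity $e_i(S\setminus U)=\sum_{k=0}^{i}(-1)^{i-k}e_k(S)h_{i-k}(U)$ directly by induction on $u=|U|$, peeling off one element of $U$ at a time via Lemma~\ref{l:basic-lemma}(1). Finally, it is worth noting that outside the range $i\le n-u$ the same computation with $U=S$ and $i\ge 1$ recovers the classical relation $\sum_{k=0}^{i}(-1)^{k}e_k(S)h_{i-k}(S)=0$, so \eqref{RHS22} may be read as interpolating between that identity and the trivial case $U=\emptyset$.
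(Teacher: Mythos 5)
Your argument is correct and is essentially the proof the paper gives: factor the generating function as $E(S\setminus U,z)=E(S,z)H(U,-z)$, extract the coefficient of $z^i$ to get $e_i(S\setminus U)=\sum_{k=0}^{i}(-1)^{i-k}e_k(S)h_{i-k}(U)$, and kill the terms with $1\le k\le i$ modulo $\E_i(S)$. Your extra remark justifying the inversion of $E(U,z)$ in $k[\xs][[z]]$ is a harmless refinement of the same computation.
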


\begin{proof} This result follows from a formal manipulation of the generating functions in (\ref{e:generating-functions}). We have 
\[
E(S \setminus U,z) = \prod_{\substack{a \in S \\ a \not \in U}} (1+za)
= \frac{\prod_{a \in S } (1+za) }{\prod_{a \in U } (1+za)} = E(S,z)
H(U,-z).
\]
Therefore, extracting the coefficient of $z^i$ from both sides of  the resulting equation 
$E(S \setminus U,z) =E(S,z) H(U,-z)$
we obtain 
\[
e_i(S \setminus U) = \sum_{j=0}^i e_j(S) (-1)^{i-j} h_{i-j}(U).
\]
By hypothesis $e_j(S)$ is in the ideal for $j=1, \ldots, i$.  Since
$e_0(S)=1$, the result follows.
\end{proof}

\section{A new combinatorial description of Tanisaki's generating set for $\Il$}
\label{s:dp-ideals}

In this section, we define a family of ideals $\Il$ in the polynomial
ring $R=k[\xs]$ indexed by partitions $\lambda$ of $n$. The ideal
$\Il$ was first introduced by De Concini and Procesi~\cite{DP} in
order to describe the coordinate ring of the schematic intersection of
the Zariski closure of the conjugacy class of nilpotent matrices of
shape $\lambda$, with the set of diagonal matrices.

In order to manipulate De Concini-Procesi ideals, we use a generating
set defined by Tanisaki~\cite{T}. A nice feature of Tanisaki's
generating set is that its elements are elementary partially symmetric
polynomials. Furthermore, Tanisaki's proof of the correctness of his
generating set is both elegant and elementary, and it is based on
standard linear algebra facts. Finally, Tanisaki's generating set has
proven to be very fruitful in algebraic combinatorics, see for example
\cite{AB,BG,GP}.

Let $\lambda=(\lambda_1,\ldots,\lambda_k)$ be a partition of $n$.  For
the purpose of the next formula, we add enough zeroes to the end of
$\lm$ so that it has $n$ terms:
$\lm=(\lambda_1,\ldots,\lambda_n)$. For any $1\leq k \leq n$, we
define
\begin{equation}\label{def:delta}
\delta_k(\lambda)=\lambda'_n + \lambda'_{n-1} + \ldots + 
  \lambda'_{n-k+1}.
 \end{equation}
It is clear that $\delta_n(\lambda)\geq
\delta_{n-1}(\lambda)\geq\ldots \geq \delta_1(\lambda)$, and that
$\delta_n(\lambda)=n$.

\begin{theorem}[Tanisaki's generating set~\cite{T}]\label{t:DP} 
The ideal $\mathcal{I}_\lambda$ is generated by the following
collection of elementary partially symmetric polynomials
\begin{equation}\label{generators}
\mathcal{I}_\lambda=\big( e_r(k) \mid k=1, \ldots, n, \; {\rm and} \ \
k \geq r > k -\delta_k(\lambda)\big).
\end{equation}
\end{theorem}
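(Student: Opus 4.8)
\medskip
\noindent\textbf{Proof proposal.}\ \
The plan is to show that the ideal $J$ generated by the family displayed in \eqref{generators} coincides with $\Il$. I would use the facts about $\Il$ that come from its definition and from De Concini and Procesi's work~\cite{DP}: it is the ideal of the scheme--theoretic intersection of $\overline{C}_\lambda$ with the diagonal matrices; it has an explicit (much larger) generating set; and the quotient $R/\Il$ has dimension $n!/(\lambda_1!\,\lambda_2!\cdots\lambda_k!)$ over $k$ (the dimension of the cohomology of the associated Springer fibre). One inclusion, $J\subseteq\Il$, is then built into that explicit description --- De Concini and Procesi's generators already include, or immediately yield, the elementary ones in \eqref{generators}, the natural tool for the translation being Proposition~\ref{p:new-presentation}, which converts $e_i$ of a subset into $\pm h_i$ of its complement modulo the symmetric functions. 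So it remains to prove the reverse inclusion, and I would obtain it from the dimension bound $\dim_k R/J\le n!/(\lambda_1!\cdots\lambda_k!)$: together with the surjection $R/J\twoheadrightarrow R/\Il$ coming from $J\subseteq\Il$, this forces equality of these finite dimensions, hence $J=\Il$.

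To prove the bound I would induct on $n$, in the style of Garsia and Procesi~\cite{GP}, with the Basic Lemma as the engine. Write $R=R'[x_n]$ with $R'=k[x_1,\ldots,x_{n-1}]$. Applying Lemma~\ref{l:basic-lemma}(1) to $S=\{x_1,\ldots,x_n\}$, and using that $e_r(S)\in J$ for all $r\le n$ (these are present because $\delta_n(\lambda)=n$), one shows that $x_n$ satisfies modulo $J$ a monic polynomial over $R'$ of degree $\ell(\lambda)$; and, after rewriting the remaining generators that involve $x_n$ with the help of Proposition~\ref{p:new-presentation}, one recognizes, for each row $i$ of the Young diagram of $\lambda$, a generating set of the ideal $J^{(i)}$ attached to the partition $\lambda^{(i)}$ obtained by deleting one box from row $i$. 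This exhibits $R/J$ as spanned over $R'$ by $1,x_n,\ldots,x_n^{\ell(\lambda)-1}$, and more precisely decomposes it as an $R'$--module into pieces $R'/J^{(i)}$, one per row $i$, so that $\dim_k R/J$ obeys the recursion $\dim_k R/J_\lambda\le\sum_i \dim_k R'/J_{\lambda^{(i)}}$. Since $\sum_i\lambda_i=n$, the multinomial coefficient $n!/(\lambda_1!\cdots\lambda_k!)$ satisfies exactly this recursion with equality, which closes the induction.

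An alternative that avoids the dimension count, and is closer to Tanisaki's own argument, is to prove the reverse inclusion $\Il\subseteq J$ directly, by expressing each of De Concini and Procesi's (Schur--like) generators as an $R$--linear combination of the $e_r(S)$ in \eqref{generators}; the identities required are again all of the type collected in Lemma~\ref{l:basic-lemma} together with the generating--function manipulation in \eqref{e:generating-functions}.

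In either route the real obstacle is the same piece of bookkeeping: one must check that deleting a box from a row of $\lambda$ shifts every cutoff $k-\delta_k(\lambda)$ in precisely the way that reproduces the claimed generators for the smaller partition (and leaves no spurious generators), respectively that the restricted family $\{e_r(S)\}$ really does suffice to generate every De Concini--Procesi generator. This is exactly the combinatorics that the regular filling of the next section is designed to organize; and since the statement here is Tanisaki's theorem~\cite{T}, in a first pass one may simply quote \cite{T} and bring in the new machinery only for the reductions that follow.
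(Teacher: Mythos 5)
The paper does not prove this statement at all: Theorem~\ref{t:DP} is imported verbatim from Tanisaki~\cite{T} (the surrounding text even comments on the elegance of \emph{his} proof), so there is no internal argument to measure you against. What you sketch is instead the Garsia--Procesi route~\cite{GP}: bound $\dim_k R/J$ by the multinomial coefficient via a box-removal recursion, quote $\dim_k R/\Il = n!/(\lambda_1!\cdots\lambda_k!)$ from the De Concini--Procesi/Springer-fibre theory~\cite{DP}, and conclude from the surjection $R/J \twoheadrightarrow R/\Il$. That strategy is sound and is essentially how \cite{GP} establish this theorem, and your degree-$\ell(\lambda)$ monic relation for $x_n$ is fine (it follows from $e_{\ell(\lambda)}(n-1)\subseteq J$ together with Proposition~\ref{p:new-presentation}).

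As written, though, both load-bearing steps are asserted rather than proved. For $J\subseteq\Il$, Proposition~\ref{p:new-presentation} is a purely formal identity among symmetric polynomials in $R$; it cannot by itself show that $e_r(k)$ lies in the ideal of the scheme-theoretic intersection. One needs the geometric input that suitable coefficients of characteristic polynomials of submatrices (De Concini--Procesi's generators, or Tanisaki's rank argument) vanish on $\overline{C}_\lambda$, and then the verification that their restrictions to the diagonal produce exactly the polynomials $e_r(S)$ with the cutoff $r>k-\delta_k(\lambda)$ --- that cutoff computation is the nontrivial linear-algebra content of \cite{T}, not a triviality. For the upper bound, the claim that after rewriting one ``recognizes, for each row $i$, a generating set of $J^{(i)}$'' is precisely the hard part of \cite{GP}: the recursion only removes corner boxes (rows of equal length must be handled with care), the generators of $J_{\lambda^{(i)}}$ have shifted cutoffs that must be matched exactly, and one must exhibit explicit ideal containments showing the filtration quotients are spanned by images of $R'/J_{\lambda^{(i)}}$. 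You acknowledge this by calling it ``bookkeeping'' and by offering the fallback of simply quoting \cite{T}; the fallback is legitimate (and is what the paper does), but it means the proposal, taken as a proof, has its two essential steps deferred rather than carried out.
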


\begin{definition}[De Concini-Procesi ideal] We call the ideal 
$\mathcal{I}_\lambda$ defined in Theorem~\ref{t:DP} the {\em De
Concini-Procesi ideal} of the partition $\lm$.

\end{definition}
 
Since for any partition $\lambda$ of $n$, $\delta_n(\lambda)=n$, when
we set $k=n$ in (\ref{generators}) we conclude that
$\mathcal{I}_\lambda$ contains all the elementary
symmetric polynomials in all the variables $x_1,\ldots,x_n$.

\begin{example}\label{ex1} Let $\lambda=(4,4,2,1,0,0,0,0,0,0,0)\vdash 11$  
be the partition appearing in Figure~\ref{Ytableau}. Then
$(\delta_1(\lambda),\ldots,\delta_{11}(\lambda))=(0,0,0,0,0,0,0,2,4,7,11)$. Hence
\[(1-\delta_1(\lambda),\ldots,11-\delta_{11}(\lambda))
   =(1,2,3,4,5,6,7,6,5,3,0).\] Here $n=11$. For $k=1,\ldots,7$ there is no
admissible $e_r(k)$ in the generating set described
in (\ref{generators}).  So the generating set of $\mathcal{I}_{(4421)}$ consists of the
following elements
\begin{eqnarray*}
\begin{tabular}{l|l|l}
{\bf }& {\bf Generators} \\
\hline&\\ 
$k=8$ & $e_7(8),e_8(8)$ \\
$k=9$ & $e_6(9), e_7(9), e_8(9), e_9(9) $ \\
$k=10$ & $e_4(10),e_5(10),\ldots, e_{10}(10)$ \\
$k=11$ & $e_{1}(11),e_2(11),\ldots, e_{11}(11)$\\
\end{tabular}
\end{eqnarray*}
\end{example}

We now give a simple combinatorial description of the set of
generators for $\Il$ described in Theorem~\ref{t:DP}, and then
demonstrate how to shorten it so that one can read a reduced
generating set for $\Il$ directly from the diagram of the partition
$\lm$. In order to do so we introduce the notion of regular filling.

\begin{definition}[The regular filling of a partition]\label{d:regular-filling} 
      Let $\lambda$ be a partition of $n$. Draw its Young diagram and
      then fill its cells with the numbers $1,2,\ldots,n$ from top to
      bottom and from left to right, skipping the cells in the bottom
      row, which should be filled at the end from right to left. This
      is called the {\em regular filling} of $\lambda$, denoted {\it
        rf}.
\end{definition}

\begin{figure}
\begin{eqnarray*}
&
  \begin{matrix} 
	           \Young{     
	            \Box{1} &    \cr
	             \Box{2} & \Box{4}    \cr
	             \Box{3} & \Box{5} & \Box{6} &\Box{7}     \cr
	              \Box{11}  & \Box{10}  & \Box{9}  & \Box{8} \cr 
	                }
  \end{matrix}
\end{eqnarray*}
\caption{The regular filling of $(4,4,2,1)$.}\label{f:regular-filling}	
\end{figure}

\begin{definition}[The reading process]\label{d:reading-process} 
	We associate to any filling $f$ of the Young diagram of
        $\lambda$ a set of partial symmetric polynomials, denoted by
        $\mathcal{G}_f(\lm)$. We read the elements of this set from
        the filling as follows. For a given column of $\lambda$ we add
        to $\mathcal{G}_f(\lm)$ all the elements of the sets $e_r(k)$,
        where $k$ is the entry in the bottom cell of the column, and
        the degrees $r$'s are given by all the entries in that column.
\end{definition}

\begin{notation}
 From now on, we enumerate columns and rows of a Young diagram from left to right by starting from zero. So the ``first'' column will be the $0$-th column; similarly for rows.
\end{notation}

\begin{example}
	For the partition $\lambda=(4,4,2,1)$, the regular filling
	{\em rf} is illustrated in Figure~\ref{f:regular-filling}.
	The reading process of this filling gives the set
	$\mathcal{G}_{\it rf}(\lm)$ consisting of: the elementary
	symmetric polynomials $e_1(x_1, \ldots, x_{11})$, $e_2(x_1,
	\ldots, x_{11})$, $e_3(x_1, \ldots, x_{11})$, $e_{11}(x_1,
	\ldots, x_{11})$, coming from the $0$-th column; the partially
	symmetric polynomials of the sets $e_4(10), e_5(10),
	e_{10}(10)$ read from the first column, $e_6(9), e_9(9)$ from
	the second column, and $e_7(8), e_8(8)$ from the last column.
\end{example}

	By using this reading process, we are going to read Tanisaki's
	generators from a special filling.

\begin{definition}[The antidiagonal filling]\label{d:antidiagonal-filling}
        Let $\lambda$ be a partition of $n$. Compute the partition
	$\delta(\lambda)$
	$$ \delta(\lambda)=\delta_n(\lambda)\geq
	\delta_{n-1}(\lambda)\geq \ldots \geq \delta_1(\lambda),$$ 
	where $\delta_k(\lambda)$ is defined as in (\ref{def:delta}), 
	and draw the Young diagram of its conjugate $\delta'(\lm)$. Now
	fill the $0$-th column of $\delta'(\lm)$ by $1,2,\ldots,n$ from
	top to bottom, and then fill the remainder of the diagram so
	that the filling is constant following each antidiagonal. We
	call this the {\em antidiagonal filling} of $\delta'(\lm)$ and 
	denote it by {\it af}. 
\end{definition}

        For our running example $\lm=(4,4,2,1,0^7)$, we have
	$\delta(\lm)=(11,7,4,2,0^7)$; the antidiagonal filling of
	$\delta'(\lm)$ is given in
	Figure~\ref{f:antidiagonal-filling}. Note that the bottom
	entry of the $k$-th column of $\delta'(\lm)$ is $n-k$.

\begin{figure}[h]
 \begin{align*}
	    \begin{matrix} 
	                 \Young{     
	                 \Box{1}      \cr
	                 \Box{2}      \cr
	                 \Box{3}      \cr
	                 \Box{4}      \cr
	                 \Box{5} & \Box{4}      \cr
	                 \Box{6} & \Box{5}      \cr
	                 \Box{7} & \Box{6}      \cr
	                 \Box{8} & \Box{7} & \Box{6}  \cr
	                 \Box{9} & \Box{8} & \Box{7}     \cr
	                 \Box{10} & \Box{9} & \Box{8} & \Box{7}     \cr
	                 \Box{11}  & \Box{10}  & \Box{9} & \Box{8}  \cr}
	      \end{matrix} 
\end{align*}
\caption{The antidiagonal filling of
$\delta'(\lm)$.}\label{f:antidiagonal-filling}
\end{figure}

Let $\lambda$ be a partition of $n$. Compute the set $\mathcal{G}_{\it
af}(\delta'(\lambda))$ by applying the reading process to the
antidiagonal filling {\it af} of $\delta'(\lambda)$. We have the
following lemma.
\begin{lemma} Let $\lambda$ be a partition of $n$. Then Tanisaki's 
set of generators is $\mathcal{G}_{\it af}(\delta'(\lambda))$.  In
particular,
$$\Il=(\mathcal{G}_{\it af}(\delta'(\lambda))).$$
\end{lemma}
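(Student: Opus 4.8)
The plan is to show that applying the reading process (Definition~\ref{d:reading-process}) to the antidiagonal filling \emph{af} of $\delta'(\lambda)$ produces exactly the collection of polynomials $\{e_r(k) \mid k=1,\ldots,n,\ k\geq r > k-\delta_k(\lambda)\}$ appearing in Tanisaki's Theorem~\ref{t:DP}. Since the lemma then asserts that these two sets literally coincide, the displayed identity $\Il=(\mathcal{G}_{\it af}(\delta'(\lambda)))$ is immediate from Theorem~\ref{t:DP}. So the entire content is a bookkeeping identification of indices.

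First I would fix notation for the diagram $\delta'(\lambda)$. Recall $\delta(\lambda)=(\delta_n(\lambda),\ldots,\delta_1(\lambda))$ is a partition of $n$ (it is weakly decreasing and $\delta_n(\lambda)=n$), so $\delta'(\lambda)$ is its conjugate. The key observation, already noted in the text just before the lemma, is that the bottom cell of the $k$-th column of $\delta'(\lambda)$ (columns indexed from $0$) contains the entry $n-k$; this is because the bottom row of $\delta'(\lambda)$ has length equal to the number of parts of $\delta(\lambda)$, and more precisely the $k$-th column of $\delta'(\lambda)$ has height equal to $\#\{i : \delta_{i}(\lambda) \geq k+1\}$ where $i$ ranges over $1,\ldots,n$. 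I would make this precise: column $k$ of $\delta'(\lambda)$ exists (is nonempty) iff $\delta_n(\lambda) > k$, i.e. iff $n > k$, and its height $h_k$ equals the number of indices $j\in\{1,\ldots,n\}$ with $\delta_j(\lambda)\geq k+1$. Because $\delta$ is weakly increasing in its subscript from $\delta_1$ to $\delta_n$, these indices form a terminal segment, so $h_k = n - \min\{j : \delta_j(\lambda)\geq k+1\} + 1$.

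Next I would read off the entries of column $k$. By the definition of the antidiagonal filling, the $0$-th column is filled $1,2,\ldots,n$ top to bottom, and the filling is constant along antidiagonals; equivalently, the entry in row $a$ (counted from the bottom, starting at $0$) and column $k$ equals $(\text{bottom entry of column } 0 \text{ in that row}) = $ the entry that sits $a$ steps up the $0$-th column from its bottom, which is $n-a$, \emph{shifted}: being constant on antidiagonals means the cell in column $k$, counting $a$ from the bottom, carries the value $n-a-k$? Let me instead say it cleanly: the bottom cell of column $k$ is $n-k$, and moving up one cell within a fixed column increases the entry by $1$ (since moving up-and-left along an antidiagonal keeps it constant, and moving up one cell is moving up-left then right one column, which must then add $1$). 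Hence column $k$ contains exactly the consecutive integers $n-k, n-k+1, \ldots, n-k+(h_k-1)$. Applying the reading process, column $k$ contributes the sets $e_r(n-k)$ for $r \in \{n-k, n-k+1,\ldots, n-k+h_k-1\}$. Writing $k' = n-k$ for the bottom entry, this says: for each value $k'\in\{1,\ldots,n\}$, we get $e_r(k')$ for $k' \leq r \leq k' + h_{n-k'} - 1$. It therefore remains to check that $h_{n-k'} - 1 = k' - 1 - (k' - \delta_{k'}(\lambda)) = \delta_{k'}(\lambda) - 1$, i.e. that the height of column $n-k'$ of $\delta'(\lambda)$ equals $\delta_{k'}(\lambda)$; equivalently $r$ ranges over $k' \leq r \leq k' + \delta_{k'}(\lambda) - 1$, matching $k' \geq r > k' - \delta_{k'}(\lambda)$ after reindexing $r \mapsto k' - (r - k' +1) \ldots$ — more directly, Tanisaki's condition $k \geq r > k - \delta_k(\lambda)$ with $k=k'$ is $r\in\{k'-\delta_{k'}(\lambda)+1,\ldots,k'\}$, a run of $\delta_{k'}(\lambda)$ consecutive values topping out at $k'$, whereas we produced a run of $h_{n-k'}$ consecutive values \emph{starting} at $k'$; so I must reconcile these. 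This forces me to recheck the direction of the filling: the antidiagonal through the bottom cell $n-k'$ of column $n-k'$ also passes through higher cells of columns to its left, and the monotonicity is that entries \emph{decrease} as one goes up a column — revisit: column $0$ reads $1$ at top and $n$ at bottom, so within column $0$ the entry \emph{increases} downward; constancy on antidiagonals (which go up-left) then means within any column the entry increases downward, so column $n-k'$ reads $n-k'$ at the bottom and smaller values above, i.e. $n-k', n-k'-1, \ldots, n-k'-h_{n-k'}+1$, a decreasing run ending (at the top) at $k' - \delta_{k'}(\lambda) + 1$ provided $h_{n-k'} = \delta_{k'}(\lambda)$. That is the content to nail down.

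The main obstacle, then, is purely the verification that the $(n-k')$-th column of $\delta'(\lambda)$ has height exactly $\delta_{k'}(\lambda)$ — equivalently that $\delta_j(\lambda) = \#\{\,\text{columns of }\delta'(\lambda)\text{ of height}\geq j\,\}$, which is just the statement that $\delta(\lambda)$ and $\delta'(\lambda)$ are conjugate partitions unwound through the indexing conventions (columns indexed from $0$; parts of $\delta(\lambda)$ written as $\delta_n \geq \cdots \geq \delta_1$). I would prove it by the definition of conjugate: the number of parts of $\delta(\lambda)$ that are $\geq j$ is, since the parts listed increasingly are $\delta_1(\lambda)\leq\cdots\leq\delta_n(\lambda)$, the length of the terminal run with $\delta_i(\lambda)\geq j$, which is exactly the height of the $(j-1)$-th column of $\delta'(\lambda)$; setting $j = k'$ and noting $j - 1 = k' - 1 = n - (n-k') - 1$ needs the identification $n-k'$-th versus $(k'-1)$-th column, which holds because $\delta'(\lambda)$ has $\delta_n(\lambda) = n$ columns indexed $0,\ldots,n-1$ and the conjugate relation pairs column $j-1$ with ``parts $\geq j$''. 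Once this is in place, combining it with the explicit description of the entries of each column (consecutive integers decreasing upward from the bottom value $n-k$) shows term-by-term that the reading process output equals Tanisaki's list, and the parenthetical $\Il = (\mathcal{G}_{\it af}(\delta'(\lambda)))$ follows from Theorem~\ref{t:DP}. I would also spell out the one edge case: columns $k$ with $h_k = 0$ (i.e. $\delta_{n-k}(\lambda) = 0$) contribute nothing, matching the fact that for such $k'=n-k$ Tanisaki's condition $k' \geq r > k'$ is vacuous, as illustrated by $k=1,\ldots,7$ in Example~\ref{ex1}.
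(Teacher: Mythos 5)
Your overall strategy is the same as the paper's: apply the reading process column by column to the antidiagonal filling and, for the column with bottom entry $n-t$, match the degrees it produces with Tanisaki's range $n-t \geq r > n-t-\delta_{n-t}(\lambda)$. After your self-correction, your description of the entries of a column (consecutive integers with $n-t$ at the bottom, decreasing as one moves up) is right, and you correctly isolate the one remaining fact to check: that the column of $\delta'(\lambda)$ whose bottom entry is $k'$ has height exactly $\delta_{k'}(\lambda)$.

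The problem is that your verification of that height identity rests on statements that are false, all stemming from a row/column (equivalently $\delta$ versus $\delta'$) mix-up. The formula $h_k=\#\{j : \delta_j(\lambda)\geq k+1\}$ counts parts of $\delta(\lambda)$ rather than of $\delta'(\lambda)$: for $\lambda=(4,4,2,1)$ one has $h_1=7$ while $\#\{j:\delta_j\geq 2\}=4$. The ``equivalent'' statement $\delta_j(\lambda)=\#\{\text{columns of }\delta'(\lambda)\text{ of height}\geq j\}$ also fails there ($\delta_2=0$, yet all four columns have height $\geq 2$); the number of parts of $\delta(\lambda)$ that are $\geq j$ is the length of the $(j-1)$-st \emph{row} of $\delta'(\lambda)$, not the height of its $(j-1)$-st column; and there is no legitimate ``identification'' of the $(k'-1)$-st column with the $(n-k')$-th column (note also that $\delta'(\lambda)$ has $\lambda_1$ columns and $n$ rows, not $n$ columns). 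The fact you need is simpler and is what the paper uses: the columns of the diagram of $\delta'(\lambda)$ are the rows of the diagram of $\delta(\lambda)$, so the $t$-th column (indexed from $0$) has height equal to the $(t+1)$-st largest part of $\delta(\lambda)$, which in the paper's indexing is $\delta_{n-t}(\lambda)$. With that one-line replacement your bookkeeping closes and the argument coincides with the paper's proof.
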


\begin{proof} Let $\lm=(\lm_1,\ldots,\lm_{n})$. Compute $\delta'(\lambda)$
 and fill its diagram with the antidiagonal filling. According to
 Theorem~\ref{t:DP}, to compute Tanisaki's generating set, we need to
 find for which $k$ the interval $[k-\delta_k(\lm)+1,\ldots, k-1, k]$
 is nonempty; clearly this happens when $\delta_k(\lm)>0$.

From the definition of $\delta_k(\lambda)$, the only times
$\delta_k(\lm)>0$ is when $k=n-\lm_1+1, \ldots, n$. So we are
considering values $e_r(S)$ for sets $S$ such that $n-\lm_1+1 \leq |S|
\leq n$. This is an interval of length $\lm_1$, and the numbers
$k=|S|$ we are considering are exactly the entries in the first row of
$\delta'(\lm)$.

Now, fix a column $t$ that has entry $n-t$ in its bottom cell.  The
generating set described in Theorem~\ref{t:DP} has $e_r(S)$, where
$|S|=n-t$ and $r=n-t - \delta_{n-t}(\lm)+1, \ldots, n-t$.
Note that there exactly $\delta_{n-t}(\lm)$ values that $r$ takes,
and that is exactly the size of the $t$-th column of
$\delta'(\lm)$. The mentioned values of $r$ are exactly the entries of
the $t$-th column of the antidiagonal filling of $\delta'(\lm)$.
\end{proof}

One can easily check that this procedure applied to the antidiagonal
filling in Figure~\ref{f:antidiagonal-filling} produces the generators
given in the table of Example~\ref{ex1}.

We are now able to show the main result of this section, namely, that
$\Il$ is the sum of three simpler ideals. In order to do so we will
use the regular filling.

\begin{theorem}\label{th:regular} Let $\lm$ be a partition of $n$. 
Fill the diagram of $\lm$ with the regular filling, and compute the
set $\mathcal{G}_{\it rf}(\lm)$ by using the reading process described
in Definition~\ref{d:reading-process}. Then
$$\mathcal{I}_\lm=(\mathcal{G}_{\it rf}(\lm)).$$
\end{theorem}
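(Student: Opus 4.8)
The plan is to show that the two generating sets $\mathcal{G}_{\it af}(\delta'(\lambda))$ and $\mathcal{G}_{\it rf}(\lambda)$ generate the same ideal, by exhibiting a term-by-term dictionary between them via Proposition~\ref{p:new-presentation}. By the Lemma immediately preceding this theorem we already know $\Il = (\mathcal{G}_{\it af}(\delta'(\lambda)))$, so it suffices to prove the equality of ideals $(\mathcal{G}_{\it af}(\delta'(\lambda))) = (\mathcal{G}_{\it rf}(\lambda))$. First I would record two structural facts. In the antidiagonal filling, the bottom cell of the $k$-th column carries the entry $n-k$ and the entries going up that column are consecutive; in particular the $0$-th column of $\delta'(\lambda)$ contributes exactly $e_1(n),\ldots,e_n(n)$, i.e. all the elementary symmetric polynomials in $\xs$, which already lie in $\Il$. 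In the regular filling, the $0$-th column of $\lambda$ has entries $1,2,\ldots,\ell(\lambda)$ in its non-bottom cells and $n$ in its bottom cell, so it contributes $e_1(n),\ldots,e_{\ell(\lambda)}(n)$ together with $e_n(n)$ — a subset of the elementary symmetric polynomials. The content of the theorem is therefore that, modulo the ideal $\E_n(\xs)=(e_1(\xs),\ldots,e_n(\xs))$ of all elementary symmetrics (which is generated by the $0$-th column of the antidiagonal filling, and contains the $0$-th column of the regular filling), the remaining columns of the two fillings cut out the same ideal.

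The key step is the translation on columns $t\geq 1$. Fix such a column $t$ of $\delta'(\lambda)$: its bottom entry is $n-t$, and by the previous Lemma the degrees appearing in it are $r = n-t-\delta_{n-t}(\lambda)+1,\ldots,n-t$, i.e. a block of $\delta_{n-t}(\lambda)$ consecutive integers ending at $n-t$. These contribute the sets $e_r(n-t)$ for $r$ in that range. Now apply Proposition~\ref{p:new-presentation} with $S=\{\xs\}$, $i=r$, and $U$ any $t$-element subset: modulo $\E_r(\xs)\subseteq \E_n(\xs)$ we have $e_r(S\setminus U)=(-1)^r h_r(U)$. As $U$ ranges over all $t$-subsets, $S\setminus U$ ranges over all $(n-t)$-subsets, so the set $e_r(n-t)$ is, modulo $\E_n(\xs)$, the same ideal as the set $\{\pm h_r(U)\mid |U|=t\}$, i.e. (up to sign) $h_r(t)$. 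So the $t$-th column of the antidiagonal filling contributes $h_r(t)$ for $r=n-t-\delta_{n-t}(\lambda)+1,\ldots,n-t$. I then need to match this against what the $t$-th column of the regular filling contributes. Here I would invoke the combinatorics of the two fillings: the $t$-th column of $\lambda$ (for $t\geq 1$) has bottom entry some $k_t$, which I claim equals $n-t$ is \emph{not} generally true — rather, one must check that the bottom entry of the $t$-th column of the regular filling is exactly the value $n - (\text{something})$ so that the reading process produces $e_{r'}(k_t)$ with $k_t$ equal to the relevant column-length complement; and then use Proposition~\ref{p:new-presentation} again, this time with $U$ of size $n-k_t$, to rewrite those $e_{r'}(k_t)$ as $\pm h_{r'}(n-k_t)$, and finally verify that the multiset of pairs $(\text{degree},\ \text{size of }U)$ obtained from column $t$ of the regular filling coincides with the multiset obtained from column $t$ of the antidiagonal filling. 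This last verification is a purely combinatorial identity relating the entries of the regular filling of $\lambda$ to the numbers $\delta_k(\lambda)$, which unwinds from the definitions of both fillings together with the relation between $\lambda'$ and the $\delta_k$'s.

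The main obstacle, and the step deserving the most care, is precisely this bookkeeping: one must set up the correspondence between a column of $\delta'(\lambda)$ and a column of $\lambda$ and check that after passing from $e$'s to $h$'s on \emph{both} sides (via Proposition~\ref{p:new-presentation}, once with a "large" $U$ and once with a "small" $U$) the degree/size data agree. A clean way to organize this is: (i) show $(\mathcal{G}_{\it rf}(\lambda))$ contains all of $\E_n(\xs)$ — this requires an auxiliary argument, e.g. that the non-bottom entries of the \emph{first} column plus the bottom entry $n$, together with the lengths of the remaining columns, force $e_j(\xs)$ into the ideal for every $j$ (using part (1) of the Basic Lemma inductively, or again Proposition~\ref{p:new-presentation}); (ii) having $\E_n(\xs)$ in hand, work everywhere modulo it, converting every $e_r(S)$ with $|S|=m$ into $\pm h_r(\xs\setminus S)$, a polynomial depending only on the complementary set of size $n-m$; (iii) observe that under this conversion the $t$-th column of the antidiagonal filling and the $t$-th column of the regular filling both produce exactly the set $\{h_r(U): |U|=t,\ r\in J_t\}$ for the \emph{same} index set $J_t$, where $J_t$ is read off from the $t$-th column length of $\lambda$ and from $\delta_{n-t}(\lambda)$ respectively and these are equal because the $t$-th column of $\lambda$ has height $\lambda'_t$ and $\delta_{n-t}(\lambda)=\lambda'_n+\cdots+\lambda'_{t+1}$ — so the two descriptions of $J_t$ are reconciled by a telescoping/re-indexing of the parts of $\lambda'$. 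Once (iii) is checked column by column, summing over $t=0,1,\ldots,\lambda_1-1$ gives $(\mathcal{G}_{\it af}(\delta'(\lambda)))=(\mathcal{G}_{\it rf}(\lambda))=\Il$, completing the proof.
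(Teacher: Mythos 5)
Your overall frame (reduce to comparing $(\mathcal{G}_{\it af}(\delta'(\lambda)))$ with $(\mathcal{G}_{\it rf}(\lambda))$) is fine, and one containment is immediate since the entries of column $t$ of the regular filling are a subset of those of column $t$ of the antidiagonal filling, read with the same set size $n-t$. But the core of your argument, step (iii), is not correct as stated: the two columns do \emph{not} carry the same index set of degrees. Column $t$ of the antidiagonal filling contains the $\delta_{n-t}(\lambda)=\lambda'_{t+1}+\lambda'_{t+2}+\cdots+\lambda'_n$ consecutive degrees ending at $n-t$, while column $t$ of the regular filling contains only $\lambda'_{t+1}$ degrees (a low consecutive run together with the bottom entry $n-t$). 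For $\lambda=(4,4,2,1)$, column $1$ of the antidiagonal filling gives degrees $4,\dots,10$ with set size $10$, whereas column $1$ of the regular filling gives only $4,5,10$; no "telescoping/re-indexing of $\lambda'$" reconciles sets of different cardinalities. The degrees missing from column $t$ of the regular filling are exactly those whose rightmost occurrence in the antidiagonal filling is in some column $t'>t$, i.e. they are represented in the regular filling by generators $e_r(n-t')$ on a \emph{smaller} variable set; to recover $e_r(n-t)$ from these you need the cross-column lifting provided by Part 2 of Lemma~\ref{l:basic-lemma} (if all $e_r(S')$ with $|S'|=k$ lie in the ideal and $r<k+1$, then so do all $e_r(S)$ with $|S|=k+1$). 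That lifting argument is precisely what your column-by-column matching omits, and it is the actual content of the theorem.

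There is a second, related gap: your step (i), that $(\mathcal{G}_{\it rf}(\lambda))$ contains all of $e_1(\xs),\dots,e_n(\xs)$, is asserted but not proved, and column $0$ of the regular filling only supplies $e_1(n),\dots,e_{\ell(\lambda)-1}(n)$ and $e_n(n)$; producing the remaining $e_j(\xs)$ again requires the same Part 2 lifting from the other columns, so your plan defers its main difficulty to an "auxiliary argument" that is essentially the whole proof. The detour through Proposition~\ref{p:new-presentation} and complete symmetric polynomials is also unnecessary. For comparison, the paper's proof is direct: work in the antidiagonal filling, note that by Part 2 of Lemma~\ref{l:basic-lemma} one may discard every occurrence of a degree $r$ except its rightmost one (smallest set size), and observe that the column heights of $\delta'(\lambda)$ drop by exactly $\lambda'_1,\dots,\lambda'_{\lambda_1}$, so the surviving cells form the regular filling of $\lambda$.
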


\begin{proof} Compute the partition $\delta'(\lm)$, fill its diagram 
with the antidiagonal filling and read off all of Tanisaki's generators.
By Part 2 of Lemma~\ref{l:basic-lemma}, if $e_r(x_1,\ldots, x_j)\ne 0$
belongs to the ideal, so does $e_r(x_1, \ldots, x_J)$ for any $J > j$.
Therefore, for each entry $r =1,\ldots, n$, we only need to keep the
generators coming from the rightmost occurrence of that $r$ in the
antidiagonal filling of $\delta'(\lm)$. So we delete all other
occurrences of $r$ in that filling, and the corresponding cell. We
obtain a filling that contains exactly one occurrence of each of the
numbers from $1$ to $n$.  Now observe that the differences of heights
between adjacent columns of $\delta'(\lm)$ are given by the sequence
$\lm^\prime_1,\ldots,\lm^\prime_{\lm_1}$. So after the deletion
process, explained above, the remaining diagram will have columns of
height $\lm^\prime_1,\ldots,\lm^\prime_{\lm_1}$. Hence it is the
diagram of our partition $\lm$. Moreover the resulting is the regular
filling, and we are done. The case of the partition
$\lambda=(4,4,2,1)$ is displayed in Figure~\ref{f:figure-in-lemma}.
\end{proof}

\begin{figure}[h]
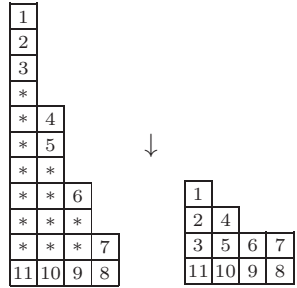

\[ \begin{array}{ccc}
	    \begin{matrix} 
	                 \Young{     
	                 \Box{1}      \cr
	                 \Box{2}      \cr
	                 \Box{3}      \cr
	                 \Box{*}      \cr
	                 \Box{*} & \Box{4}      \cr
	                 \Box{*} & \Box{5}      \cr
	                 \Box{*} & \Box{*}      \cr
	                 \Box{*} & \Box{*} & \Box{6}      \cr
	                 \Box{*} & \Box{*} & \Box{*}     \cr
	                 \Box{*} & \Box{*} & \Box{*} & \Box{7}      \cr
	                 \Box{11}  & \Box{10}  & \Box{9} & \Box{8}  \cr}
	      \end{matrix} 
& \downarrow &
	    \begin{matrix}
	               \Young{
	                \cr \; \cr \; \cr \; \cr \; \cr \; \cr \; \cr \; \cr \; \cr \; \cr \; \cr
	                \cr \; \cr \; \cr \; \cr \; \cr \; \cr \; \cr \; \cr \; \cr \; \cr \; \cr \; \cr \; \cr \; \cr \; \cr
	                \Box{1} \cr  
			\Box{2} & \Box{4}  \cr
	                \Box{3}& \Box{5} & \Box{6} & \Box{7}  \cr
	                \Box{11}& \Box{10} & \Box{9} & \Box{8}  \cr             
	                }
	     \end{matrix} 
\end{array}
\]
\caption{From the antidiagonal to the regular
filling.}\label{f:figure-in-lemma}
\end{figure}

\begin{remark}
Observe that $e_j(S)$ for $S$ of cardinality $j$ is a square free
monomial of degree $j$. So once we have all square-free monomials of
degree $n-\lm_1+1$ in our ideal, then we have the ones of higher
degree. These monomials are obtained when we read the generators
coming from the rightmost entry of the bottom row.
\end{remark}

The following statement follows easily from the previous remark and
Theorem~\ref{th:regular}.

\begin{corollary}[First reduction of Tanisaki's generating set for $\Il$]\label{t:first-reduction} 
Let $\lm$ be a partition of $n$. Then $\Il$ can be described as the
sum of the following three ideals:
$$\Il=\Ml+\El+\Kl,$$ where
\begin{itemize}
\item $\Ml$ is generated by all square-free monomials of degree
$n-\lm_1+1$;
\item $\El$ is generated by the elementary symmetric polynomials
$e_1(\xs),\ldots,e_{\ell(\lambda)-1}(\xs)$;
\item $\Kl$ is generated by the partially symmetric polynomials in
$e_r(k)$, where $n-1 \geq k \geq n-\lm_1+1$, and $r$ in an entry of
the regular filling of $\lambda$, in the same column as $k$, and
strictly above it.
\end{itemize}
\end{corollary}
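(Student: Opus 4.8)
The plan is to read the three-way decomposition directly off the regular filling, using Theorem~\ref{th:regular}, which already tells us that $\Il=(\mathcal{G}_{\it rf}(\lm))$. So it suffices to partition the generators in $\mathcal{G}_{\it rf}(\lm)$ into three groups matching $\Ml$, $\El$, and $\Kl$, and then show that the group matching $\Ml$ can in fact be replaced by \emph{all} square-free monomials of degree $n-\lm_1+1$ (not just those literally read off the bottom row), which is precisely the content of the preceding \textbf{Remark}.

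First I would recall the structure of the regular filling. The $0$-th (leftmost) column is filled from top to bottom with $1,2,\ldots,\ell(\lm)-1$ in its non-bottom cells, and its bottom cell — the bottom-left cell of the diagram — receives $n$ (the last entry of the bottom-row fill, which goes right to left). Every other bottom cell of column $t$ (for $1\le t\le \lm_1-1$) receives an entry $k$ with $n-1\ge k\ge n-\lm_1+1$; indeed the bottom row, read right to left, gets $n-\lm_1+1,\ldots,n-1,n$, so column $t$'s bottom entry is $n-t$ for $t\ge 1$ and is $n$ for $t=0$. Now apply the reading process column by column. From the $0$-th column we obtain the sets $e_r(\ell(\lm))$ — wait, more carefully: the bottom entry of the $0$-th column is $n$, so from that column the reading process contributes $e_r(n)=\{e_r(\xs)\}$ for each entry $r$ in that column, i.e.\ for $r=1,\ldots,\ell(\lm)-1$ (the non-bottom entries) and $r=n$ (the bottom entry). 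The generators $e_1(\xs),\ldots,e_{\ell(\lm)-1}(\xs)$ are exactly $\El$. From each column $t\ge1$ with bottom entry $k=n-t$, the reading process contributes $e_k(k)$ (from the bottom cell, since the bottom entry equals the column's $k$-value) together with $e_r(k)$ for each entry $r$ strictly above the bottom cell in that column; the former is a single square-free monomial of degree $k=n-t$, and the latter are exactly the generators listed in $\Kl$.

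It remains to handle the square-free monomials. The single-element sets $e_k(k)$ arising from the bottom cells of columns $t=0,1,\ldots,\lm_1-1$ are the square-free monomials on the variable sets of sizes $n, n-1, \ldots, n-\lm_1+1$ respectively that happen to be picked out by the reading process. But by the \textbf{Remark}, once the ideal contains \emph{all} square-free monomials of degree $n-\lm_1+1$, it automatically contains every square-free monomial of any larger degree (multiply by further variables), so the monomial generators of degrees $n-\lm_1+2,\ldots,n$ are redundant given $\Ml$. Conversely — and this is the point I expect to require the most care — I must check that $\mathcal{G}_{\it rf}(\lm)$, or equivalently Tanisaki's full set, does contain \emph{all} square-free monomials of degree $n-\lm_1+1$, not merely the finitely many in the reading of the bottom-right cell. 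Here the argument is: the set $e_{n-\lm_1+1}(n-\lm_1+1)$ already lies in $\Il$ (it is $e_r(k)$ with $k=r=n-\lm_1+1$, and $k-\delta_k(\lm)=k-\delta_{n-\lm_1+1}(\lm)<k$ since $\delta_{n-\lm_1+1}(\lm)=\lm'_n\ge 1$ by the length-of-first-row computation in the previous lemma), and $e_{n-\lm_1+1}(n-\lm_1+1)$ ranges over \emph{all} square-free monomials of that degree as $S$ ranges over all $(n-\lm_1+1)$-subsets of $\{\xs\}$. So $\Ml\subseteq\Il$. Combining: $\Il=(\mathcal{G}_{\it rf}(\lm))=\Ml+\El+\Kl$, with the containment ``$\subseteq$'' from the reading process decomposition plus the \textbf{Remark} absorbing the higher-degree monomials, and ``$\supseteq$'' from the three displayed generating sets each being visibly inside $\Il$ (for $\Ml$ via the argument just given, for $\El$ and $\Kl$ directly as subsets of Tanisaki's generators). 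The one genuinely delicate point is the bookkeeping that the bottom-row entries of the regular filling are exactly $n-\lm_1+1,\ldots,n$ and that the bottom cell of column $t$ contributes the single monomial $e_{n-t}(n-t)$; once that indexing is pinned down the rest is immediate.
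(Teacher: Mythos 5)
Your proposal is correct and follows essentially the same route as the paper: it combines Theorem~\ref{th:regular} with the preceding Remark, splitting the regular-filling generators $\mathcal{G}_{\it rf}(\lm)$ into the column-$0$ elementary symmetric polynomials ($\El$), the above-bottom entries of the remaining columns ($\Kl$), and the bottom-row square-free monomials, of which only those of degree $n-\lm_1+1$ are needed ($\Ml$). One cosmetic point: since $e_r(k)$ by definition denotes the whole set $\{e_r(S) : |S|=k\}$, the bottom cell of the last column already reads off \emph{all} square-free monomials of degree $n-\lm_1+1$, so your separate verification that $\Ml\subseteq\Il$ via Tanisaki's set is not needed (and in that aside $\delta_{n-\lm_1+1}(\lm)$ equals $\lm'_{\lm_1}$, not $\lm'_n$, though the required positivity still holds).
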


In the particular case where the indexing partition $\lambda$ is a
hook, we recover the minimal generating set for $\mathcal{I}_\lambda$
described in \cite[Proposition 3.4]{BFR}.

\section{Second reduction of the generating set for $\Il$} \label{s:top-cells}

Our goal in the rest of the paper is to shave off as many redundant
generators as possible from the generating set given in
Corollary~\ref{t:first-reduction} . It turns out that only partially
symmetric polynomials coming from the top value of each column are
required in the generating set. This finding already gives a large
reduction in the number of generator needed in the generating set of
Tanisaki.  Several other reductions will be obtained in the
following sections.

Suppose we have a partition $\lm$ of an integer $n$, and fill the
diagram of $\lm$ with the regular filling defined in
Definition~\ref{d:regular-filling}. For $k\geq 1$ we label the value
in the top cell of the $k$-th column with $b_{k}$, as long as the
height of the $k$-th column is $\geq 2$.  If the right-most column of
$\lambda$ has height 1, then we label its entry $b_s$. This is
reflected in the diagram in Figure~\ref{f:reduction-figure}. Note that
with this notation we have
$$b_1=\lm'_1,\ 
b_2=\lm'_1+\lm'_2-1,\ \ldots,\ b_{k}=\lm'_1+\ldots+\lm'_k-k+1 \mbox{ for }
k \leq t, \ b_s=n-s, $$ where we set
\begin{equation}\label{def:st} 
t=\lm_2-1,\ \mbox{ and } \ s=\lm_1-1.
\end{equation}
Clearly if $\lambda_1=\lambda_2$, then $t=s$ and $b_s$ does not exist.

\begin{figure}[h]
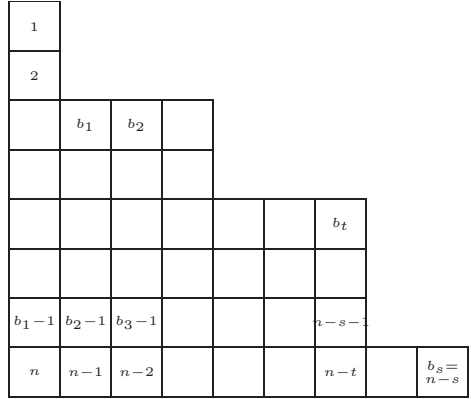

{\newdimen\Squaresize \Squaresize=19pt
\newdimen\Thickness \Thickness=.3pt
\newdimen\Correction \Correction=7pt
\[
\tiny{\begin{matrix} 
     \Young{ \Box{1}\cr
             \Box{2}\cr
             \Box{} &\Box{b_1} & \Box{b_2} &\Box{} \cr 
             \Box{} &\Box{} & \Box{} &\Box{} \cr 
             \Box{} &\Box{} & \Box{} &\Box{} &\Box{} &\Box{} &\Box{b_{t}}\cr 
             \Box{} &\Box{} & \Box{} &\Box{} & \Box{} &\Box{} &\Box{} \cr 
             \Box{b_1-1}&\Box{b_2-1} & \Box{b_3-1} &\Box{} & \Box{} &\Box{} & 
               \Box{n-s-1} \cr 
             \Box{n}&\Box{n-1} & \Box{n-2} &\Box{} & \Box{} &\Box{}& 
              \Box{n-t} &\Box{} & \Box{\mystack{b_s=}{n-s}}  \cr} 
     \end{matrix}}
\]
\caption{Diagram of a partition $\lm$ of $n$ with the regular
filling.}\label{f:reduction-figure}}
\end{figure}

By Corollary~\ref{t:first-reduction}) the reduced form of Tanisaki's
generating set for $\Il$ is the union of the following sets:

\begin{eqnarray}\label{e:generators}
\begin{tabular}{ll}
Column $0$ & $e_1(n),\ldots,e_{b_1-1}(n)$ \\
Column $1$ & $e_{b_1}(n-1),\ldots,e_{b_2-1}(n-1)$ \\ 
Column $2$ & $e_{b_2}(n-2),\ldots,e_{b_3-1}(n-2)$ \\
$\vdots$ & $\vdots$ \\
Column $t$ & $e_{b_{t}}(n-t),\ldots,e_{n-s-1}(n-t)$ \\
Column $s$ (if $s>t$) & $e_{n-s}(n-s)$, or all square-free monomials of degree $(n-s)$. 
\end{tabular}
\end{eqnarray}

Our goal here is to show that it is enough to pick only one set of
generators in each column, other than the $0$-th column; namely, the
ones coming from the top values in each column.

\begin{theorem}[Principal reduction of the generating set for $\Il$]\label{t:second-reduction} 
Let $\lm$ be a partition of $n$, and suppose that
the diagram of $\lm$ has been filled as in Figure~\ref{f:reduction-figure}. 
Then a generating set for $\Il$ is

\begin{eqnarray}\label{e:reduced-generators}
\begin{tabular}{ll}
Column $0$ & $e_1(n),\ldots,e_{b_1-1}(n)$  \\
Column $1$ & $e_{b_1}(n-1)$  (or $x_1^{b_1}, \ldots, x_n^{b_1}$)\\ 
Column $2$ & $e_{b_2}(n-2)$ \\
$\vdots$ & $\vdots$  \\
Column $t$ & $e_{b_{t}}(n-t)$  \\
Last column (if $s> t$) & $e_{n-s}(n-s)$, or all square-free monomials of degree 
  $(n-s)$. 
\end{tabular}
\end{eqnarray}

If $\lm=(1^n)$ is the one-column partition, then we also need to add
 the element $e_n(n)=x_1\cdots x_n$ to this generating set. If
$\lm=(n)$ is the one-row partition, we only need generators from the
last column, in other words $\I_{(n)}=(\xs)$.
\end{theorem}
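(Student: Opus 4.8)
The plan is to start from the generating set in~(\ref{e:generators}), coming from Corollary~\ref{t:first-reduction}, and show that within each column $j$ (for $1 \le j \le t$), every generator $e_r(n-j)$ with $b_j < r \le b_{j+1}-1$ (resp.\ $r \le n-s-1$ when $j=t$) already lies in the ideal generated by the top value $e_{b_j}(n-j)$ together with the generators read from columns $0,1,\ldots,j-1$. The key algebraic input is the Basic Lemma (Lemma~\ref{l:basic-lemma}): part~(1) lets us peel a variable off a set, and parts~(2),(3) let us pass between $e_r(k)$ and $e_r(k-1)$, $e_{r-1}(k-1)$ up to scalar multiples (the scalars are nonzero because $\mathrm{char}\,k=0$ and the relevant indices stay in range). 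Iterating part~(1) and summing, one gets the standard identity $e_r(S) = \sum_{i} e_i(T)\, e_{r-i}(S\setminus T)$ for $T \subseteq S$; applied with $S$ of size $n-j$ and $T$ of size $n-j-1$ (so $S\setminus T$ is a single variable $x$), this expresses $e_r(n-j)$-members in terms of $e_*(n-j-1)$-members and powers of variables.

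**Key steps, in order.** First I would record the "column-lowering" consequence of the Basic Lemma: if $e_a(k) \subseteq \Il$ then $e_a(k-1) \subseteq \Il$ and $e_{a-1}(k-1)\cdot(\text{variable}) $ combinations lie in $\Il$ — more precisely, from part~(2), $\sum_{x\in S} e_a(S_x) = (k-a)e_a(S)$, and since each $S_x$ has size $k-1$, knowing all of $e_a(k)$ gives all of $e_a(k-1)$ provided $k \ne a$; and from part~(1), $e_a(S_x) = e_a(S) - x\,e_{a-1}(S_x)$. Second, proceeding by induction on the column index $j$: assume the generators from columns $0,\ldots,j-1$ together with the top generator $e_{b_j}(n-j)$ of column $j$ generate the same ideal as all generators from columns $0,\ldots,j$. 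For the inductive step I would take an arbitrary $e_r(n-j)$ with $b_j < r$, write $n-j = (n-j+1) - 1$, and use part~(1): for $x \in S$ with $|S| = n-j+1$, $e_r(S) = e_r(S_x) + x\,e_{r-1}(S_x)$. Since $r-1 \ge b_j \ge b_{j-1}$ lies among the degrees available in column $j-1$ (here is where the precise values $b_j = \lambda'_1+\cdots+\lambda'_j - j + 1$ and the nesting $b_{j-1} \le b_j$ matter), the term $e_{r-1}(S_x)$ — a member of $e_{r-1}(n-j)$ — is already in $\Il$ by the induction hypothesis applied at degree $r-1$; and $e_r(S)$, a member of $e_r(n-j+1)$, is a column-$(j-1)$ generator or follows from one by the lowering step. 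Hence $e_r(n-j) \subseteq \Il$ without needing $e_r(n-j)$ as a generator, so only the top value $e_{b_j}(n-j)$ survives in column $j$. Third, I would handle the last column $s > t$ by the Remark preceding Corollary~\ref{t:first-reduction}: all square-free monomials of degree $n-s = (n-s)$ come from $e_{n-s}(n-s)$, and these were already isolated as $\Ml$. Finally, the two degenerate cases: for $\lambda = (1^n)$ the diagram is a single column, $\Il = \El$ is generated by $e_1(n),\ldots,e_{n-1}(n)$, but $e_n(n) = x_1\cdots x_n$ is the unique top cell of that column (filled last) and is not recovered from the others, so it must be added; for $\lambda = (n)$ we have $\ell(\lambda)=1$ so $\El = 0$, $\Kl = 0$ (no columns of height $\ge 2$), and $\Ml$ is generated by the square-free monomials of degree $n-\lambda_1+1 = 1$, i.e.\ $\Il = (x_1,\ldots,x_n)$, matching the last column $e_{n-s}(n-s) = e_0 \cdot(\ldots)$ degenerating to the variables.

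**Main obstacle.** The delicate point is the bookkeeping that makes the induction close: when lowering from $e_r(n-j+1)$ to $e_r(n-j)$ via part~(1), the "error term" $x\,e_{r-1}(S_x)$ must be absorbable, which requires $r-1$ to be a degree already handled in an earlier column \emph{or} already shown redundant in the current column at a smaller degree. So the induction is really a double induction — outer on the column index $j$, inner on the degree $r$ (downward from the column's maximal degree to $b_j$, or upward from $b_j+1$). One must verify that the chain of degrees $b_j, b_j+1, \ldots$ within column $j$ connects correctly to the chain $b_{j-1}, \ldots, b_j - 1$ of column $j-1$, i.e.\ that the intervals of degrees in consecutive columns overlap or abut in exactly the way the values $b_k = \lambda'_1 + \cdots + \lambda'_k - k + 1$ dictate; the regular filling was designed precisely so that the top cell of column $j$ carries the value $b_j$ which equals one more than the bottom-but-reading value $b_j - 1$ sitting in row just above the bottom of column $j-1$, which is what glues the induction. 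I also need the scalar coefficients $(k-a)$, $a$ appearing in Lemma~\ref{l:basic-lemma}(2),(3) to be nonzero throughout, which is where $\mathrm{char}\,k = 0$ is used; I would note explicitly that the indices never force $k = a$ except exactly in the case $\lambda = (1^n)$ at $e_n(n)$, which is why that case is the sole exception.
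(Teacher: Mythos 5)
Your proposal follows essentially the same route as the paper's own proof: a column-by-column induction in which Part 1 of the Basic Lemma gives $e_i(S)=e_i(S^x)-x\,e_{i-1}(S)\equiv -x\,e_{i-1}(S)$ modulo the previous column (iterated, i.e.\ your inner induction on the degree, down to the top value $b_j$), while Parts 2 and 3 with their nonzero characteristic-zero scalars recover the full-variable polynomials $e_i(n)$ for $i\ge b_1$ at the column-$0$/column-$1$ interface, exactly where the $(1^n)$ exception you identify arises; note only the wording slip that Part 2 passes from $e_a$ on sets of size $k-1$ \emph{up} to $e_a$ on sets of size $k$ (not the reverse), which is the direction your argument in fact uses, and that the inner induction should be run for all degrees $r\ge b_j$ (not just $r\le b_{j+1}-1$) so that the hypothesis feeding the next column is strong enough --- the same identity covers this, as you anticipate in your bookkeeping remarks.
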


    \begin{proof}  We need to show that having in the ideal all
     generators read from the top index of each column implies that
     the other partially symmetric functions coming from the larger
     indices in that column also belong to the ideal.  We go column by
     column, and build a new ideal $I_\lambda$ by adding generators
     described in (\ref{e:reduced-generators}) for each column of
     $\lambda$. We show, each time, that $I_\lambda$ contains all the
     other generators described in (\ref{e:generators}) (coming from
     the same column), and therefore $I_\lambda=\Il$.

    \begin{itemize}

     \item[\emph{Col. 0.}] There is nothing to prove here, as we are
     keeping all the generators $e_1(n), \ldots, e_{b_1-1}(n)$.

     \item[\emph{Col. 1.}] Assume that we have $e_{b_1}(S)\in I_\lambda$ for
     all $S$ with $|S|=n-1$. By Part 2 of Lemma~\ref{l:basic-lemma},
     setting $j=b_1$, we see that we have $e_{b_1}(n)\in I_\lambda$.

     For each $i>b_1$, we can assume by induction on $i$ that
     $$e_1(n),\ldots, e_{i-1}(n) \in I_\lambda \mbox{ and } \hfill
     e_{b_1}(n-1), \ldots, e_{i-1}(n-1) \in I_\lambda.$$ Apply
     Part 3 of Lemma~\ref{l:basic-lemma} with $j=i$, to see that
     $e_{i}(n) \in I_\lambda$.

     Fix a set $S$ with $|S|=n-1$ and $x \notin S$. Let $S^x=S\cup
     \{x\}$. Part 1 of Lemma~\ref{l:basic-lemma} implies that
     $$e_{i}(S)=e_{i}(S^x)-xe_{i-1}(S)$$ which demonstrates that
     $e_{i}(S) \in I_\lambda$. Hence $e_i(n-1) \in I_\lambda$.

      The fact that the generators $e_{b_1}(n-1)$ can be replaced by
      the powers $x_1^{b_1},\ldots, x_n^{b_1}$ follows directly from
      Proposition~\ref{p:new-presentation}.  Note that, in particular,
      we have $e_i(n-1) \in I_\lambda$, for all $i\geq b_1$.

    \item [\emph{Col. j.}]  Suppose $I_\lambda$ contains all generators
    from the previous columns $0,\ldots,j-1$ as described in
    (\ref{e:reduced-generators}). Let $|S|=n-j$, and suppose $x\notin
    S$, so that $|S^x|=n-j+1$, ($S^x=S\cup \{x\}$). We know by
    induction that $I_\lambda$ contains $e_h(S^x)$ for all $h\geq
    b_{j-1}$. Therefore, since $b_{j}>b_{j-1}$, for $i\geq b_{j}$ we
    have by Part 1 of Lemma~\ref{l:basic-lemma}
     
          $\begin{array}{lll}
        e_i(S)&=e_i(S^x)-xe_{i-1}(S)&=-xe_{i-1}(S)\\
        &=-x(e_{i-1}(S^x)-xe_{i-2}(S))&=x^2e_{i-2}(S)\\
        &=x^2(e_{i-2}(S^x)-xe_{i-3}(S))&=-x^3e_{i-3}(S)\\ &
        \hspace{.2in}\vdots & \\
        &=(-1)^{i-b_{j}}x^{i-b_{j}}e_{b_{j}}(S)& (mod \ Col.\ j-1)
      	\end{array}$

   This means that once we include $e_{b_{j}}(S)$ in $I_\lambda$, we
   will have  all $e_{i}(S) \in I_\lambda$ for $i\geq b_{j}$.
   
    \end{itemize} 
    \end{proof}

In the case where $\lm$ is a hook, the generating set described in
Theorem~\ref{t:second-reduction} coincides with the minimal
generating set for $\Il$ introduced in our earlier work~\cite{BFR}.

\begin{example}\label{e:first-reduction}  Let $\lm=(5,4,4,3)$. 
Then,  the regular filling of $\lambda$ is
\newdimen\Squaresize \Squaresize=12pt
\newdimen\Thickness \Thickness=.3pt
\newdimen\Correction \Correction=7pt
\begin{eqnarray*}
\begin{matrix} 
     \Young{ \Box{1} &\Box{4} & \Box{7} \cr 
             \Box{2} &\Box{5} & \Box{8} &\Box{10} \cr 
             \Box{3} &\Box{6} & \Box{9} &\Box{11} \cr 
             \Box{16}&\Box{15} &\Box{14} & \Box{13} &\Box{12} 
               \cr} 
     \end{matrix}
\end{eqnarray*}
So the generators of $\Il$ are
\begin{eqnarray*}
\begin{tabular}{c|l|l}
{\bf Column}& {\bf Generators} & {\bf Number of generators}\\
\hline&&\\ 
$0$ & $e_1(16),e_2(16),e_3(16) $ &3 \\
$1$ & $x_1^4,\ldots,x_{16}^4$ & 16\\
$2$ & $e_7(14)$&120\\
$3$ & $e_{10}(13)$&560\\
$4$ & $e_{12}(12)$, or all square-free monomials of degree $12$& 1820\\
\hline&&\\
&{\bf Total}& 2519
\end{tabular}
\end{eqnarray*}
Later in Example~\ref{e:final-reduction} we shall further reduce the
generating set of this particular partition.
\end{example}
\smallskip

\subsection{Remarks on a related work and conjecture of Weyman}\label{r:weyman-comparison}

\idiot{(sara) Weyman told me that he doesn't know if his conjectured
  minimal generating set is even a generating set.}

We end this section by showing some relations between the generating
set of Theorem~\ref{t:second-reduction} and two generating sets for $\Il$
arising in the work of Weyman~\cite{W1}.

In~\cite{W1} Weyman uses the representation theory of the general
linear group to construct and study generating sets for the ideal
$\Jl$ of polynomial functions vanishing on the conjugacy class
$\mathcal{C}_\lambda$. 
The generators in the first family, denoted by $V_{\lambda}$, are
expressed as  sums of minors, and
come from reducible representations of $GL(n)$.  The second set of
generators $U_{\lambda}$, on the other hand, arises from the
irreducible representations of $GL(n)$. The set $U_{\lambda}$  is 
smaller than $V_{\lambda}$, but
how to compute its elements is not explicit in the paper.
\smallskip

The set $V_\lambda$ (respectively $U_\lambda$) is given by the
disjoint union of sets $V_{i,p}$ (respectively $U_{i,p}$), where the
family of indices $(i,p)$ can be read off from a special diagram
introduced by Weyman; see~\cite[Example (4.5)]{W1}. We call this
diagram the {\em Weyman diagram} of $\lm$.  It is possible to
construct the Weyman diagram of a partition starting from the
antidiagonal filling (see Definition~\ref{d:antidiagonal-filling}) as
follows. First, consider the antidiagonal filling of
$\delta'(\lambda)$, and justify its columns in such a way that equal
entries are now in same rows. Then, replace any entry of this diagram
by an $X$. The resulting picture is the Weyman diagram. In
Figure~\ref{Weymandiagram} we illustrate the Weyman diagram
corresponding to the partition $\lambda=(4,4,2,1)$. Compare this
diagram to the one in Figure \ref{f:antidiagonal-filling}. Note that
if the top $X$ in the $i$-th column of Weyman diagram of $\lambda$ has
coordinates $(i,p)$, then the top cell of the $i$-th column of the
regular filling of $\lambda$ is filled by $p$.

\begin{figure}[h]
\begin{eqnarray*}
\begin{matrix} 
p= 1 & \underline{X} & & & & \\
p= 2 & \underline{X} & & & & \\    
p= 3 & \underline{X} & & & & \\
p= 4 & \underline{X} & \underline{X} & & & \\
p= 5 & \underline{X} & X & & & \\
p= 6 & \underline{X} & X & \underline{X} & & \\
p= 7 & \underline{X} & X & X & \underline{X} & \\
p= 8 & \underline{X} & X & X & X & \\
p= 9 & \underline{X} & X & X & & \\
p= 10 & \underline{X} & X & & & \\
p= 11 & \underline{X} &   & & & \\ 
i= & 0 & 1 & 2 & 3 \\
\end{matrix}
\end{eqnarray*}
\caption{Weyman diagram for $\lambda=(4,4,2,1)$.}\label{Weymandiagram}
\end{figure}

We would like to remark that Weyman follows a convention opposite to
ours when labelling the ideals $\Il$ and $\Jl$: he labels
$\mathcal{J}_\lambda$ the ideal of polynomial functions vanishing on
all nilpotent matrices with Jordan blocks $\lambda_1, \ldots,
\lambda_n$, while we use the transpose.  On the other hand, he
associates to a partition $\lambda$ what in our setting would be the
Weyman diagram of $\lambda'$.  These two facts cancel out, and we do
not need to take any transpose when reading statements involving his
diagrams.

\begin{definition}[Weyman's generating set for $\mathcal{J}_\lambda$]
 
In \cite[Theorem (4.6)]{W1} Weyman shows that the ideal $\Jl$ is
generated by the $U_{i,p}$, where the $(i,p)$'s are the coordinates of
the top cells of the columns ($i\geq 1$) of the Weyman diagram of
$\lambda$, together with the invariants $U_{0,p}$ with $1 \le p \le
n$. This result implies that the ideal $\Jl$ is also generated by the
$V_{i,p}$ coming from the same set of indices $(i,p)$. 
\end{definition}

\begin{example} For the partition $\lambda=(4,4,2,1)$, whose
Weyman diagram is in Figure~\ref{Weymandiagram}, Weyman's set
$U_\lambda$ consists of $U_{0,p}$, with $1 \le p \le 11$, $U_{1,4}$,
$U_{2,6}$, and $U_{3,7}$ (and similarly for the set $V_\lambda$). The
cells $X$ whose coordinates label this generating set are underlined.
\end{example}

After adding the generators for the ideal defining the diagonal
matrices to the two sets $V_\lambda$ and $U_\lambda$, one gets two
generating sets for $\Il$; we denote these two generating sets by
$\tilde{V_\lambda}$ and $\tilde{U_\lambda}$.

Instead of going into the definitions of $V_\lambda$ and $U_\lambda$
that can be found in \cite[Section 4]{W1}, we explicitly state the
cardinalities of their components in order to compare them with our
generating set. We emphasize the fact that Tanisaki's generators (the
ones we use) are easier to handle than Weyman's generators.  We have
that
$$|V_{i,p}|={ n \choose i}^2 \ \text{and} \ \ |\tilde{V}_{i,p}|={n
\choose i},$$ and
$$|U_{i,p}|={ n \choose i}^2 - { n \choose i-1}^2 \ \text{and}
\ \ |\tilde{U}_{i,p}|={n \choose i} - { n \choose i-1}.$$

\idiot{${ n \choose i} > { n \choose i-1}$ because of the diagonal and
  antidiagonal rules to construct Weyman's diagrams imply that $n> 2
  i$ (and $n > 2( n-i )$). Hence it follows from the fact that the
  strictly binomial coefficients ${n \choose k}$ increase until $k$ is half
  of $n$.}

 It turns out that the cardinalities of the generating set for $\Il$
 given by the $\tilde{V}_{i,p}$'s and the generating set given in
 Theorem~\ref{t:second-reduction} are the same. Moreover, it is not
 difficult to describe a one-to-one correspondence between the two
 generating sets. Under this correspondence Weyman's $V_{i,p}$
 generators correspond to our generators read from the top cell of the
 $i$-th column of the regular filling, as described in
 Theorem~\ref{t:second-reduction}.

Weyman conjectured that a special subset of $U_\lambda$ gives a
minimal generating set of $\Jl$; see Conjecture 5.1 and Remark 5.3 of
\cite{W1}.

\begin{conjecture}[Weyman's original conjecture]\label{c:weyman}
Let $\lambda$ be a partition. The set consisting of $U_{0,p}$ for
$1\leq p \leq \ell(\lambda)$, and $U_{i,p}$, where $(i,p)$ labels a
top cell of the $i$-th row (in the Weyman diagram of $\lambda$), such
that there are no $X$'s to the right of or on the line segment joining
$(i,p)$ with $(0,1)$, is a minimal set of generators
$\mathcal{W}_\lambda$ of $\Jl$.
\end{conjecture}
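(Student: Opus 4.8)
This last statement is a conjecture of Weyman, not a theorem of the present paper; indeed, as the introduction announces, one of the paper's goals is to \emph{refute} it, so what is wanted here is not a proof but a plan for producing a counterexample. The plan is to recast the question as one about the minimal number of generators of a homogeneous ideal, settle it first for the ``diagonal'' ideal $\Il$ where the combinatorics developed above give full control, and only then push a failure down to $\Jl$. As a preliminary step I would make $\mathcal{W}_\lambda$ completely explicit for a given $\lambda$: form the Weyman diagram from the antidiagonal filling, locate the top cell $(i,p)$ of each column $i\geq 1$, and keep only the cells meeting the ``no $X$ on or to the right of the segment joining $(i,p)$ to $(0,1)$'' condition. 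Using $|U_{i,p}|=\binom{n}{i}^{2}-\binom{n}{i-1}^{2}$ together with the $U_{0,p}$ for $1\leq p\leq \ell(\lambda)$, this gives a closed combinatorial formula for $|\mathcal{W}_\lambda|$, and, via the restriction-to-the-diagonal correspondence discussed above (under which $|\tilde{U}_{i,p}|=\binom{n}{i}-\binom{n}{i-1}$), an explicit count of the size $|\tilde{\mathcal{W}}_\lambda|$ of the candidate generating set it produces for $\Il$.

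Next I would pin down $\mu(\Il)$, the minimal number of generators of $\Il$. Every $e_r(S)$ is homogeneous, so $\Il$ is a homogeneous ideal of $R=k[\xs]$ and $\mu(\Il)=\dim_k \Il/\mm\Il$ may be computed one degree at a time. Starting from the generating set of Theorem~\ref{t:second-reduction} and the further column reductions developed later in the paper, I would manufacture linear dependencies modulo $\mm\Il$ among those generators — the Basic Lemma~\ref{l:basic-lemma} and Proposition~\ref{p:new-presentation} being precisely the machine for producing such dependencies — and thereby determine $\mu(\Il)$, or at least a sharp upper bound. A partition $\lambda$ with $|\tilde{\mathcal{W}}_\lambda|>\mu(\Il)$ then refutes the ``modified'' version of the conjecture; I would search for this strict inequality along an entire infinite family of shapes — e.g.\ shapes on which the later column reductions bite hardest while $\mathcal{W}_\lambda$ does not shrink to match — so as to obtain infinitely many counterexamples at once.

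Finally, descend from $\Il$ to $\Jl$. A generating set of $\Jl$ restricts (set the off-diagonal matrix entries to $0$) to a generating set of $\Il$; hence, if $\mathcal{W}_\lambda$ is not minimal for $\Jl$ then $\tilde{\mathcal{W}}_\lambda$ is not minimal for $\Il$, but the converse can fail, since restriction may introduce new redundancies. So a downstairs counterexample is only a lead: to confirm an upstairs counterexample one must work directly in the polynomial ring on the matrix entries. The practical route is to take the smallest partition in the family found above, feed an explicit full generating set of $\Jl$ (Weyman's $V_{i,p}$, which are sums of minors) into Macaulay 2, compute $\mu(\Jl)$ from a minimal free resolution of $R/\Jl$, and check that $\mu(\Jl)$ differs from $|\mathcal{W}_\lambda|$ — in either direction this contradicts the conjecture (too large: $\mathcal{W}_\lambda$ generates but is not minimal, or does not generate; too small: it cannot generate at all).

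The main obstacle is exactly this last passage: the combinatorics give a firm grip on $\mu(\Il)$, but there is no comparably explicit description of $\mu(\Jl)$, which is why the decisive counterexample has to be verified by computer rather than by hand. A secondary pitfall is bookkeeping: $|\mathcal{W}_\lambda|$ must be counted exactly from the segment condition on the Weyman diagram, and an off-by-one in deciding which top cells survive would invalidate the comparison.
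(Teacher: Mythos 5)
You correctly recognize that this statement is a conjecture which the paper refutes rather than proves, and your plan---use the combinatorial reductions to exhibit shapes where the diagonal version fails for $\Il$, then confirm the failure of the original conjecture for $\Jl$ by a Macaulay~2 computation of the minimal generators on the smallest such shape---is essentially the paper's own route, which culminates in the partition $(4,3,1)$, where the conjectured set has $3138$ elements but only $786$ generators are needed. Your caveat that non-minimality downstairs does not formally imply non-minimality upstairs, so the $\Jl$ case must be checked directly, matches the paper's treatment as well.
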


A very interesting question is the following.
\begin{question}[Diagonal version of Weyman's
    conjecture]\label{diagonalconjecture} Is the generating set
    $\tilde{\mathcal{W}}_\lambda$ for $\Il$ arising from Weyman's
    conjecture minimal ?
\end{question} 

In the following sections we show that the the answer to this question
is negative. Indeed, we provide some infinite families of
counterexamples. These observations, together with the help of
Macaulay 2 led us to the discovery that even the original conjecture
of Weyman (Conjecture~\ref{c:weyman}) fails already for one of the
smallest elements in these families.

\section{Reducing generators of $\Il$ of a fixed degree} \label{s:towardsU}

The aim of this section is to consider the generating set of $\Il$
described in Theorem~\ref{t:second-reduction}, and eliminate as many
redundant generators as possible from each column. 

\begin{proposition}[Columns of height $>1$]\label{p:reduction} 
Let $\lm$ be a partition whose diagram is represented in
Figure~\ref{f:reduction-figure}. For $k\ge2$, if the height of the
$(k-1)$-st column is $>1$, then we can eliminate ${n-1\choose k-1}+1$
generators of $\Il$ (as described in (\ref{e:reduced-generators}))
that come from the $k$-th column. Indeed, if $S$ denotes the set of
variables $x_1,\ldots,x_n$, we can eliminate the elements in the
set $\{e_{b_k}(S_{1,i_2,\ldots,i_{k}}) \mid 1< i_2 <\ldots<i_{k} \leq
n \}$ and $e_{b_k}(S_{2,3,\ldots,k+1})$.
\end{proposition}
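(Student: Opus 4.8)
The plan is to show that, working modulo the generators already placed in the ideal from columns $0,1,\dots,k-1$ (as listed in (\ref{e:reduced-generators})), the single generator $e_{b_k}(S)$ — where $S=\{\xs\}$, so $e_{b_k}(S)=e_{b_k}(n)$ — already forces all the $e_{b_k}(S')$ with $|S'|=n-1$ into $I_\lambda$, hence by the Col.~$j$ argument inside the proof of Theorem~\ref{t:second-reduction} forces the entire $k$-th column. Then we count how many of the ``removable'' generators $e_{b_k}(S')$, $|S'|=n-k+?$, can be expressed via the Basic Lemma and Proposition~\ref{p:new-presentation} in terms of the survivors, and observe that this count is exactly ${n-1\choose k-1}+1$.

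First I would set up the induction. By hypothesis the $(k-1)$-st column has height $>1$, so $b_{k-1}<b_k$ and the generator $e_{b_{k-1}}(n-k+1)$ lies in $I_\lambda$; moreover, by the telescoping computation in the proof of Theorem~\ref{t:second-reduction} (the \emph{Col.~j} case), every $e_i(T)$ with $|T|=n-k+1$ and $i\ge b_{k-1}$ lies in $I_\lambda$. In particular $e_i(T)\in I_\lambda$ for $|T|=n-k+1$ and $b_{k-1}\le i< b_k$. Now apply Part~1 of Lemma~\ref{l:basic-lemma} repeatedly: for a set $S'$ with $|S'|=n-1$, writing $S'=S\setminus\{x_j\}$ for some single removed variable $x_j$, we do \emph{not} immediately get $e_{b_k}(S')$ — the point is subtler. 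Instead, the key identity is Proposition~\ref{p:new-presentation}: modulo $\E_{b_k}(S)=(e_1(S),\dots,e_{b_k}(S))$, we have $e_{b_k}(S\setminus U)=(-1)^{b_k}h_{b_k}(U)$ for any $U\subseteq S$ with $|U|\le n-b_k$. Since the generators $e_1(n),\dots,e_{b_1-1}(n)$ from Column~$0$ are in $I_\lambda$, and we will show $e_{b_1}(n),\dots,e_{b_k}(n)$ are in $I_\lambda$ (exactly as in the proof of Theorem~\ref{t:second-reduction}, using Parts~2 and 3 of Lemma~\ref{l:basic-lemma} column by column), the whole ideal $\E_{b_k}(S)$ lies in $I_\lambda$. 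Hence, modulo $I_\lambda$, each $e_{b_k}(S\setminus U)$ is congruent to $\pm h_{b_k}(U)$, and these complete-homogeneous polynomials in the variables of $U$ satisfy linear relations (Newton-type / the relation $\sum_{j}(-1)^j e_j h_{m-j}=0$ truncated by membership of the $e_j(U)$... more directly, $h_{b_k}(U)$ for various $U$ of the same size are \emph{not} independent once $|U|$ exceeds certain bounds). Concretely, for $U=\{x_1,x_{i_2},\dots,x_{i_k}\}$ of size $k$ we have $|S\setminus U|=n-k$, and the identity $e_{b_k}(S_{1,i_2,\dots,i_k})\equiv (-1)^{b_k}h_{b_k}(\{x_1,x_{i_2},\dots,x_{i_k}\})$ expresses each such generator through complete symmetric polynomials in $k$ variables always including $x_1$; these $h_{b_k}$'s are in turn expressible via the already-present $e_{b_k}(T)$ with $|T|=n-k+1$ (equivalently $h$ in $k-1$ variables) together with the monomial $e_{b_k}(\,\cdot\,)$ survivors — this is where the count ${n-1\choose k-1}$ (choices of $\{i_2,\dots,i_k\}\subseteq\{2,\dots,n\}$) plus the one extra relation yielding $e_{b_k}(S_{2,3,\dots,k+1})$ comes from.

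The cleaner route, which I would actually write, is: (1) show $\E_{b_k}(S)\subseteq I_\lambda$ once $e_{b_k}(n)$ is added, by the column-by-column argument of Theorem~\ref{t:second-reduction}; (2) invoke Proposition~\ref{p:new-presentation} to replace, modulo $I_\lambda$, every generator $e_{b_k}(S\setminus U)$ by $(-1)^{b_k}h_{b_k}(U)$; (3) use the identity $h_{b_k}(U)=\sum_{x\in U}x\,h_{b_k-1}(U)$-type recursions, or more efficiently $h_m(U)=h_m(U\setminus\{x\})+x\,h_{m-1}(U)$, to show that the $h_{b_k}(U)$ for $|U|=k$ containing a fixed variable $x_1$ are all determined modulo $I_\lambda$ by the $h_{b_k}$ in $\le k-1$ variables (which correspond to the surviving generators from earlier columns and the top-of-column-$k$ generator) — this eliminates exactly the ${n-1\choose k-1}$ generators indexed by $\{e_{b_k}(S_{1,i_2,\dots,i_k})\}$, and one more bookkeeping relation among the $h_{b_k}$'s of $k$ variables not containing $x_1$ removes $e_{b_k}(S_{2,3,\dots,k+1})$. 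The main obstacle I anticipate is step~(3): making the linear-algebra of the relations among $\{h_{b_k}(U):|U|=k\}$ modulo the subspace spanned by $\{h_{b_k}(U'):|U'|\le k-1\}$ precise enough to pin the eliminated set to exactly the stated $\binom{n-1}{k-1}+1$ elements, rather than merely bounding it. I would handle this by fixing the variable $x_1$ and doing a careful induction on $|U|$ using $h_{b_k}(U)=h_{b_k}(U_{x_1})+x_1 h_{b_k-1}(U)$ together with Part~1 of Lemma~\ref{l:basic-lemma} to peel off the $x_1$-dependence, checking that each removed generator is accounted for exactly once.
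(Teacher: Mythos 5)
Your steps (1)--(2) are sound (indeed $\E_{b_k}(S)\subseteq\mathcal{I}_{k-1}$ already after Columns $0$ and $1$, so Proposition~\ref{p:new-presentation} lets you trade each column-$k$ generator $e_{b_k}(S\setminus U)$, $|U|=k$, for $\pm h_{b_k}(U)$), but the heart of the argument, your step (3), has a genuine gap. The recursion $h_{b_k}(U)=h_{b_k}(U\setminus\{x_1\})+x_1h_{b_k-1}(U)$ does not reduce $h_{b_k}(U)$ to elements of the ideal: the first term corresponds to $e_{b_k}(T)$ with $|T|=n-k+1$ and is indeed in $\mathcal{I}_{k-1}$ (this is where the hypothesis on the $(k-1)$-st column enters, via $b_k\geq b_{k-1}$ and the Col.~$j$ argument of Theorem~\ref{t:second-reduction}), but the second term involves $h_{b_k-1}(U)\equiv\pm e_{b_k-1}(T)$ with $|T|=n-k$, a partially symmetric polynomial of degree $b_k-1$ on a set of size $n-k$, which lies \emph{below} the threshold degree $b_k$ for that column and is not in $\mathcal{I}_{k-1}$ in general; the peeling never terminates in ideal elements. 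Moreover, the conclusion you aim for --- that each $h_{b_k}(U)$ with $x_1\in U$ is congruent mod $\mathcal{I}_{k-1}$ to a combination of $h_{b_k}$'s in at most $k-1$ variables, all of which are already in $\mathcal{I}_{k-1}$ --- cannot be correct: $\mathcal{I}_{k-1}$ is stable under permuting the variables, so it would force \emph{every} $e_{b_k}(T)$, $|T|=n-k$, into $\mathcal{I}_{k-1}$ and make the entire $k$-th column redundant, contradicting the hook case (where the set of Theorem~\ref{t:second-reduction} is minimal) and the counts in the examples. The same objection applies to your opening claim that $e_{b_k}(n)$ ``forces the entire $k$-th column'' (note also that the column-$k$ generators live on sets of size $n-k$, not $n-1$). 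The eliminated generators must be written in terms of the \emph{kept} column-$k$ generators together with $\mathcal{I}_{k-1}$, and your sketch never produces such an expression, nor does it single out the specific set $\{e_{b_k}(S_{1,i_2,\ldots,i_k})\}\cup\{e_{b_k}(S_{2,\ldots,k+1})\}$ or the count ${n-1\choose k-1}+1$.

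What is missing is a supply of linear relations among the column-$k$ generators themselves, modulo $\mathcal{I}_{k-1}$, plus a rank computation. The paper gets them from Part~2 of Lemma~\ref{l:basic-lemma}: for every $(k-1)$-subset $\{i_1,\ldots,i_{k-1}\}$ one has $\sum_{j\notin\{i_1,\ldots,i_{k-1}\}}e_{b_k}(S_{i_1,\ldots,i_{k-1},j})=(n-k+1-b_k)\,e_{b_k}(S_{i_1,\ldots,i_{k-1}})\equiv 0 \pmod{\mathcal{I}_{k-1}}$, since $|S_{i_1,\ldots,i_{k-1}}|=n-k+1$. This is a homogeneous system whose coefficient matrix is the inclusion matrix of $(k-1)$-subsets versus $k$-subsets of $\{1,\ldots,n\}$, and the proof is completed by checking that the ${n-1\choose k-1}+1$ columns indexed by the $k$-sets containing $1$ together with $\{2,\ldots,k+1\}$ form a submatrix of full column rank (an identity block plus one extra row), so exactly those generators can be solved for in terms of the remaining ones modulo $\mathcal{I}_{k-1}$. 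Your $h$-reformulation is compatible with this (the same relations read as relations among the $h_{b_k}(U)$ with $|U|=k$), but without these same-size relations and the explicit rank argument the statement is not proved.
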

 
\begin{proof} 
Let $k>1$, by using Part 2 of Lemma~\ref{l:basic-lemma} we write

       \begin{eqnarray}\label{e:Lemma-nilpotents-2} 
        \sum_{j\notin \{i_1,\ldots,i_{k-1}\}}e_{b_k}(S_{i_1, 
        \ldots,i_{k-1},j})= (n-b_k-k+1) e_{b_k}(S_{i_1,\ldots,i_{k-1}}) \equiv 0 \ ({\rm mod} \ \mathcal{I}_{k-1})
        \end{eqnarray}
where $\mathcal{I}_{k-1}$ is the ideal of generators coming from
columns $0$ to $k-1$.

So we have a system of $n \choose k-1$ linear homogeneous equations,
in $n \choose k$ variables.  In fact we have one equation for each
choice of a $(k-1)$-subset $\{i_1,\ldots,i_{k-1}\}$, and one variable
$e_{b_k}(S_{i_1, \ldots,i_{k-1},j})$ for each $k$-subset $\{i_1,
\ldots,i_{k-1},j\}$.
        
        The matrix associated to this system has columns $J$ indexed
        by the $k$-subsets of $\{1,2,\ldots,n\}$, and rows $I$ indexed
        by $k-1$-subsets of $\{1,2,\ldots,n\}$.  Equation
        (\ref{e:Lemma-nilpotents-2}), says that at position $(I,J)$
        the entry will be $1$ if $I \subseteq J$ and $0$ if $I
        \not\subseteq J$.
   
   We claim that we can drop from the generating set of
         Theorem~\ref{t:second-reduction} $ e_{b_k}(S_{J})$, for all
         $J$ of cardinality $k$ containing $1$, and $
         e_{b_k}(S_{2,\ldots,k+1}).$ To prove this it suffices to show
         that the submatrix corresponding to these columns has full
         rank $ {n-1 \choose k-1}+1$.
        
        We order the columns of this submatrix in this way: we put
        first the the columns indexed by a $J$ containing $1$ in
        alphabetical order, and then column indexed by
        $\{2,\ldots,k+1\}$. Similarly, we order the rows starting with
        those indexed by subsets $I $ that do not contain $1$, in
        alphabetical order, and then the row indexed by
        $\{1,\ldots,k-1\}$, and then the other rows in any order. In
        Figure~\ref{gauss} two examples are displayed.

        The square submatrix given by the first ${n-1 \choose k-1}+1$
        rows consists of two blocks. An identity $ {n-1 \choose
          k-1}$-matrix together with an additional row: $(1, \ldots,
        1, 0, \ldots, 0 )$, with $n-k+1$ ones. In fact, this last row is
        indexed by $\{1,\ldots,k-1\}$, and the entries are $1$ at
        columns indexed by $\{1,2,\ldots,k-1, j\}$ for $j>k$, and zero
        otherwise. By Gauss elimination, it is easy to see that this
        submatrix has full rank.
\end{proof}
 
\begin{figure}
$$\begin{array}{c|cccccc}
  & 12 & 13 & 14 & 23 & 24 & 34 \\
\hline
2 & {\bf 1} & {\bf 0} & {\bf 0} & {\bf 1} & 1 & 0 \\ 
3 & {\bf 0} & {\bf 1} & {\bf 0} & {\bf 1} & 0 & 1 \\ 
4 & {\bf 0} & {\bf 0} & {\bf 1} & {\bf 0} & 1 & 1 \\ 
1 & {\bf 1} & {\bf 1} & {\bf 1} & {\bf 0} & 0 & 0 \\ 
\end{array} 
\hspace{2cm}
\begin{array}{c|cccccc}
  & 123 & 124 & 134 & 234  \\
\hline
23 & {\bf 1} & {\bf 0} & {\bf 0} & {\bf 1}  \\ 
24 & {\bf 0} & {\bf 1} & {\bf 0} & {\bf 1}  \\ 
34 & {\bf 0} & {\bf 0} & {\bf 1} & {\bf 1}  \\ 
12 & {\bf 1} & {\bf 1} & {\bf 0} & {\bf 0}  \\ 
\hline
13 & {\bf 1} & {\bf 0} & {\bf 1} & {\bf 0}  \\ 
14 & {\bf 0} & {\bf 1} & {\bf 1} & {\bf 0}  \\ 
\end{array} 
$$
\caption{The non-singular submatrices for $n=4$, $k=2$, and $n=4$, $k=3$.}\label{gauss}  
\end{figure}

\begin{remark}  
	The system (\ref{e:Lemma-nilpotents-2}) has ${n \choose k-1}$
        linear equations and ${n \choose k}$ variables.  If all the
        equations are independent, then ${n \choose k-1}$ variables
        are redundant.  Hence only ${n \choose k}-{n \choose k-1}$ of
        them are necessary. Then using Gauss elimination we would
        obtain an explicit generating set of the same size as Weyman's
        $\tilde{U}_{k,p}$.  We note that there is no explicit
        construction for the generators in $U_\lm$ in Weyman's paper
        \cite{W1}.
 \end{remark} 

\begin{remark} 
	Let $\lambda$ be a partition of $n$ different than $(n)$.  As
        a consequence of Proposition~\ref{p:reduction}, the number of
        generators coming from the top cell of column $k$ in our
        generating set for $\Il$ is ${n \choose k} - {n-1 \choose k-1}
        - 1$.  On the other hand, and as discussed in
        Section~\ref{r:weyman-comparison} the corresponding
        $\tilde{U}_{k,p}$ in Weyman's generating set consists of ${n
          \choose k} - {n \choose k-1}$ elements. Since for all
        partitions other than $(n)$, we have that $n > k$, we conclude
        that the difference between the two sets is ${ n-1 \choose k-2
        } - 1 $, for each $k > 2$. For columns $0$, $1$, and $2$ their
        cardinalities coincide.
\end{remark}

 We now focus on eliminating generators from a column of height 1. 

\begin{proposition}[Columns of height 1]\label{p:reduction-height-1} 
Let $\lm$ be a diagram represented in
Figure~\ref{f:reduction-figure}. If $s> t \geq 1$, then we can
eliminate ${n-s+t \choose t}$ square-free monomial generators of $\Il$
coming from the last column.\end{proposition}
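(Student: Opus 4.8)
The plan is to fix an arbitrary subset $A\subseteq\{\xs\}$ of cardinality $n-s+t$ (such a set exists since $t<s$ forces $n-s+t\le n$), and to prove that every one of the $\binom{n-s+t}{n-s}=\binom{n-s+t}{t}$ square-free monomials of degree $n-s$ whose support lies inside $A$ is redundant, i.e.\ lies in the ideal generated by the remaining generators of $(\ref{e:reduced-generators})$. This is the exact analogue of Proposition~\ref{p:reduction}: there one extracts a full-rank submatrix of an inclusion matrix; here the relevant submatrix will turn out to be a permutation matrix, which makes the elimination completely explicit.

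First I would isolate the relations that do the work. Let $\mathcal{I}_t$ denote the ideal generated by the generators of $(\ref{e:reduced-generators})$ coming from columns $0,\ldots,t$. By the ``Col.\ $j$'' step of the proof of Theorem~\ref{t:second-reduction} applied with $j=t$, we have $e_r(S)\in\mathcal{I}_t$ for every $S$ with $|S|=n-t$ and every $r\ge b_t$. Since column $t$ has height $\ge 2$ and the entry directly above its bottom cell is $n-s-1$ (see Figure~\ref{f:reduction-figure}), we get $b_t\le n-s-1<n-s$; hence $e_{n-s}(S)\in\mathcal{I}_t$ whenever $|S|=n-t$. Writing $V=\{\xs\}\setminus S$ for the complementary $t$-subset and expanding $e_{n-s}(S)$ as the sum of all degree-$(n-s)$ square-free monomials whose support avoids $V$, we obtain, for each $t$-subset $V\subseteq\{\xs\}$, the congruence
\[
\sum_{\substack{U\subseteq\{\xs\}\setminus V\\ |U|=n-s}}\ \prod_{x\in U}x\ \equiv\ 0\pmod{\mathcal{I}_t}.
\]

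Now let $m_U=\prod_{x\in U}x$ be a degree-$(n-s)$ square-free monomial with $U\subseteq A$, and set $V=A\setminus U$, so that $|V|=t$. Applying the congruence above to this $V$ writes $m_U$ as a $\mathbb{Z}$-linear combination, modulo $\mathcal{I}_t$, of the monomials $m_{U'}$ with $|U'|=n-s$, $U'\cap V=\varnothing$ and $U'\ne U$. The key observation is that no such $U'$ can be contained in $A$: if $U'\subseteq A$ then, since $|U'|+|V|=(n-s)+t=|A|$ and $U'\cap V=\varnothing$, we would have $U'=A\setminus V=U$. Therefore $m_U$ lies in the ideal generated by $\mathcal{I}_t$ together with the degree-$(n-s)$ monomials \emph{not} supported on $A$, all of which are generators in $(\ref{e:reduced-generators})$ other than $m_U$ itself. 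Letting $U$ range over the $(n-s)$-subsets of $A$, we conclude that all $\binom{n-s+t}{n-s}=\binom{n-s+t}{t}$ of these monomials may be removed while the remaining generators still generate $\mathcal{I}_\lambda$.

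The one point that requires care is that the elimination be non-circular: the relation used to discard $m_U$ must not mention any other discarded monomial. This is precisely what the choice $|A|=n-s+t$ (rather than a larger set) guarantees, via the bijection $U\mapsto A\setminus U$ between the discarded monomials and the $t$-subsets of $A$ indexing the relations we use; in the matrix language of Proposition~\ref{p:reduction}, the square submatrix of the relevant inclusion matrix with rows indexed by the $t$-subsets of $A$ and columns by the $(n-s)$-subsets of $A$ is a permutation matrix, hence invertible. (One could in fact delete more monomials by taking $A$ larger and solving a genuine linear system, but the clean explicit description would be lost.) There is no further obstacle: the underlying Lemma~\ref{l:basic-lemma} bookkeeping has already been carried out inside the proof of Theorem~\ref{t:second-reduction}.
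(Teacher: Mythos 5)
Your proposal is correct and follows essentially the same route as the paper: the paper drops exactly the degree-$(n-s)$ monomials supported in the specific set $\{x_{s-t+1},\ldots,x_n\}$ of size $n-s+t$ and, for each such monomial, uses the expansion of $e_{n-s}(S_{i_1,\ldots,i_t})$ (already in the ideal of columns $0,\ldots,t$, since $n-s>b_t$) into square-free monomials avoiding a $t$-set, which is your congruence with $V=A\setminus U$. The only difference is cosmetic: you allow an arbitrary set $A$ of cardinality $n-s+t$ and spell out the non-circularity (that no other discarded monomial appears in the relation), a point the paper leaves implicit.
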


 \begin{proof} Note that as $n-s>b_t$ (see Figure~\ref{f:reduction-figure}), from the proof of Theorem~\ref{t:second-reduction} we know that
      $e_{n-s}(n-t) \in \Il$. We now claim that 
      we can drop monomial generators of the form      
        $$\begin{array}{ll} 
        e_{n-s}(S_{1,2,\ldots,s-t,i_1,\ldots,i_{t}}),& 
        \    s-t< i_1< i_2< \ldots < i_t \leq n
	  \end{array}$$ from the generating set for $\Il$.
        Since there are ${n-s+t \choose t}$ such choices for sets
        $\{i_1,\ldots,i_t\}$, this will settle the statement of the
        proposition. But this follows from the trivial identity
        \[
        e_k(A) =  \sum_{\substack{ J \subseteq A\\ |J|=k}} e_k(J),
        \]
which implies 
        \[
        e_{n-s}(S_{1,2,\ldots,s-t,i_1,\ldots,i_{t}})=e_{n-s}(S_{i_1,\ldots,
         i_{t}})-\sum_{\mysumstack{\mbox{$\scriptstyle
         \{j_1,\ldots, j_{s-t}\} \neq
         \{1,\ldots,s-t\}$}}{\mbox{$\scriptstyle \{j_1,\ldots,
         j_{s-t}\}\cap \{i_1,\ldots,i_t\} =\emptyset$}}}
         e_{n-s}(S_{j_1,\ldots, j_{s-t}, i_1,\ldots,i_{t}}) \in \Il.
         \]
      \end{proof}

Therefore using Propositions~\ref{p:reduction}
and~\ref{p:reduction-height-1}, we have reduced our generating set to
that in the table in Figure~\ref{f:table}, using the Vandermonde
identity ${n \choose k} = {n-1 \choose k-1}+{n-1\choose k}$.

\begin{figure}[h]
$$
\begin{array}{l|l|l}
{\bf Column} & {\bf Generators} & {\bf Number}\\
\hline&&\\
0 & e_1(n),\ldots,e_{b_1-1}(n) &  b_1-1=\lm'_1-1 \\
1 & x_1^{b_1}, \ldots, x_n^{b_1} & {n\choose 1}\hfill ={n-1\choose 1}+1\\ 
2 & e_{b_2}(n-2) & {n\choose 2} - {n-1\choose 1}-1 \hfill ={n-1\choose 2}-1\\
\vdots & \hspace{.25in} \vdots & \hspace{.35in} \vdots \\
t & e_{b_{t}}(n-t) & {n\choose t} - {n-1\choose t-1}-1  
                         \hfill ={n-1\choose t}-1 \\
s \ (\mbox{if } s>t) &e_{n-s}(n-s)& {n \choose s}-{n-s+t \choose t}
\end{array}
$$\caption{Number of generators in each degree in the reduced generating set for $\Il$}\label{f:table}
\end{figure}

\begin{example}
Consider the partition $\lm=(4,4,2,1)$ in Figure~\ref{f:regular-filling}.
Our formula gives 177 generators, but in fact, Macaulay2 verifies that
168 generators are enough. The extra generators are in degree 7 (see
table in Figure~\ref{f:table}):

\bigskip
\begin{tabular}{l|l|l}
{\bf Degrees} & {\bf Number of generators from Table~\ref{f:table}} &\bf{Actual number of generators required}\\
\hline &&\\
1, 2, 3 & 1 in each degree & 1 in each degree\\
4 &11&11\\
6&44&44\\
7&119&110 
\end{tabular}
\bigskip
\end{example}

While in many examples such as the previous one, the predictions of
the diagonal version of Weyman's conjecture are correct, this is not
always the case.

\begin{example}\label{e:counterexample} 
Consider the partition $\lambda=(5,4,1)$. 
\newdimen\Squaresize \Squaresize=12pt
\newdimen\Thickness \Thickness=.3pt
\newdimen\Correction \Correction=7pt
\begin{figure}[h]
\begin{eqnarray*}
	           \Young{     
	            \Box{1}   \cr
	             \Box{2} & \Box{3} & \Box{4} & \Box{5}   \cr
	             \Box{10} & \Box{9} & \Box{8} &\Box{7} & \Box{6}  \cr}
\end{eqnarray*}
\caption{The partition $\lambda=(5,4,1)$}
\end{figure}
We denote by $\mathcal{I}_{01}=(e_1(10),e_{2}(10), x_1^3,\ldots,
x_{10}^3)$ the ideal generated by the elements of the $0$-th and
$1$-st column. Now consider $e_4(8)$ coming from the second
column. Let $A \subseteq \{1, \ldots, n \}$ be a subset of of
cardinality $8$, and let $B$ be its complement ($|B|=2$). By
Proposition~\ref{p:new-presentation}, we have mod
$\E_3(10)$ \begin{eqnarray}\label{ex:541} e_4(A)\equiv h_4(B) =
  m_{(4)}(B)+m_{(3,1)}(B)+m_{(2,2)}(B).
\end{eqnarray}

Among the monomial symmetric polynomials appearing in (\ref{ex:541}), $m_{(4)}$, and $m_{(3,1)}$ are already in the $\I_{01}$, since it contains $x_1^3,\ldots, x_n^3.$
So from the second column we only need to add the set $m_{(2,2)}(2)$ to the generators of $\mathcal{I}_{01}$ to obtain a bigger ideal denoted $\mathcal{I}_{012}$ included in $\Il$. That is, we need to add all generators of the form $(x_i x_j)^2$ for $i<j$.

Now let us consider $e_5(A)$, where $|A|=7$ and $B$ is its
complement. From the third column
\begin{multline}\label{3partitions}
- e_5(A) \equiv  h_5(B)  
          = m_{(5)}(B)+m_{(3,2)}(B)+m_{(4,1)}(B)+m_{(3,1,1)}(B)+m_{(2,2,1)}(B).
\end{multline}
It is clear that each one of these monomial symmetric polynomials is
already in the ideal $\I_{012}$. In fact, every monomial in the first
four summands in (\ref{3partitions}) contains a power $x_i^3$, and
each element in $m_{(2,2,1)}(B)$ can be obtained as a combination of
elements in $m_{(2,2)}(2)$.  Hence the third column will not
contribute any new generator. The same happens for the last
column. Let $|A|=6$ and $B$ be its complement, $|B|=4$.  Then
\begin{eqnarray*}
e_6(A) = h_6(B) &=& m_{(6)}(B)+ m_{(5,1)}(B) + m_{(4,2)}(B) + m_{(3,3)}(B)\\ 
&+& m_{(4,1,1)}(B) + m_{(3,2,1)}(B)+m_{(2,2,2)}(B) \\
&+& m_{(3,1,1,1)}(B)+m_{(2,2,1,1)}(B),
\end{eqnarray*}
and all monomials in this sum are already in the ideal, since they
contain either a power $x_i^3$, or a monomial $(x_ix_j)^2$. So we have
$\Il=\I_{012}$.
\end{example}

\idiot{(Sara) According to an old M2 file that I have, the betti diagram for a minimal generating set
  for $\I_{(5,4,1)}$ is

\begin{tabular}{rrr}
           & 0&  1\\
o5 = total:& 1& 47\\
         0:& 1&  1\\
         1:& .&  1\\
         2:& .& 10\\
         3:& .& 35\\
\end{tabular}

and for $\I_{(5,4,1)}$ itself is
\bigskip

\begin{tabular}{rrr}
           & 0&   1 \\
o3 = total:& 1& 387\\
         0:& 1&   1\\
         1:& .&   1\\
         2:& .&  10\\
         3:& .&  45\\
         4:& .& 120\\
         5:& .& 210\\
           & 0&  1\\
\end{tabular}
}

\begin{counterexample}[Counterexample to the diagonal version of Weyman's conjecture]\label{primocontro}

Example~\ref{e:counterexample} proves that the generating set
$\tilde{\mathcal{W}}_\lambda$ for $\Il$ coming from the minimal
generating set for $\Jl$ conjectured by Weyman is not in general
minimal (see Question~\ref{diagonalconjecture}). More precisely,
according to his diagram in Figure~\ref{We-diag}, some generators of
degree $5$ and $6$ should be needed, while they are not, as we just
showed. In Figure~\ref{We-diag} the coordinates of the underlined
$X$'s label the generators of $\Il$ arising from the diagonal version
of Weyman's conjecture. The generators coming from the shaded $X$'s are
not needed. This is the convention that we shall use later as well.

\begin{figure}[h]
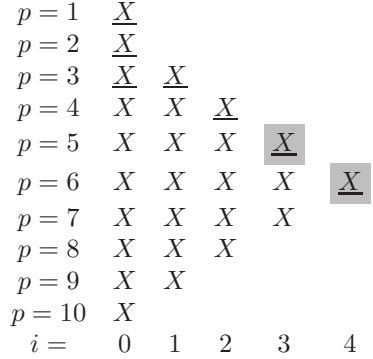

\begin{eqnarray*}
\begin{matrix} 
p= 1 & \underline{X} & & & & \\
p= 2 & \underline{X} & & & & \\    
p= 3 & \underline{X} & \underline{X} & & & \\
p= 4 & X & X & \underline{X} & & \\
p= 5 & X & X & X & \shadebox{\underline{X}} & \\
p= 6 & X & X & X & X & \shadebox{\underline{X}} \\
p= 7 & X & X & X & X &  \\
p= 8 & X & X & X &  & \\
p= 9 & X & X &  & & \\
p= 10 & X &  & & & \\
i= & 0 & 1 & 2 & 3 & 4\\
\end{matrix}
\end{eqnarray*}
\caption{Weyman diagram for $\lambda=(5,4,1)$.}\label{We-diag}  
\end{figure}
\end{counterexample}

It might be possible to generalize the reasoning used in
Example~\ref{e:counterexample} with an algorithm, as explained below.

\begin{algorithm}\label{thealgorithm}
Consider the Young diagram of $\lambda$ filled with the regular
filling. Let $b_1,\ldots,b_s$ be the top-cell entries of $\lambda$ as in
Figure~\ref{f:reduction-figure}. Set
$\mathcal{G}_0=\{e_1(n), \ldots, e_{b_1-1}(n)\}$, and create a list of
partitions $L_0=\emptyset.$ For all $k\geq 1$, define
$$ U_k = \{ \mu \vdash b_k \,|\, \ell(\mu) \leq k \ \text{ and} \ \nu
\not \subseteq \mu, \ \mbox{for any} \ \nu \in L_{k-1} \},
$$ where $\nu \subseteq \mu$ means that the Young diagram of $\nu$ is
contained in that of $\mu$.

\begin{itemize}
\item[1)] If $|U_k|=1$, say $U_k=\{ \theta \}$, then $L_k = L_{k-1} \cup
\{ \theta \}$ and $\mathcal{G}_k = \mathcal{G}_{k-1} \cup
m_{\theta}(k)$.

\item[2)] If $|U_k|=0$, then $\mathcal{G}_k=\mathcal{G}_{k-1}$ and  $L_k=L_{k-1}$.

\item[3)] If $|U_k|>1$, then $\mathcal{G}_k = \mathcal{G}_{k-1} \bigcup
\big( \bigcup_{l\geq k} h_{b_l}(l) \big)$, and stop.
\end{itemize}
Denote by $\mathcal{G}$ the set produced by the algorithm at the last step.
\end{algorithm}

\begin{question}
Is the  set $\mathcal{G}$ a generating set for $\Il$?
\end{question}

Clearly this algorithm produces a subset of the generating
set given by the Theorem~\ref{t:second-reduction}. All generators
coming from cells labeled $b_k$ satisfying condition $2)$ in the above
algorithm would become redundant.

We used this algorithm to produce generating sets for all families of
examples and counterexamples considered in the next section. Then, we
proceeded to prove their correctness on a one by one basis. A proof of
the correctness of the algorithm would be greatly welcomed.

\section{Families of examples and a counterexample to Weyman's conjecture}\label{ultima}

We conclude the paper by producing simple generating sets for some
particular families of shapes. In particular, this allows us to
construct two infinite families of counterexamples to the diagonal
version of Weyman's conjecture (Question~\ref{diagonalconjecture}), as
well as a counterexample to the original conjecture of Weyman for a
minimal generating set of the ideal $\mathcal{J}_\lambda$ (see
Conjecture~\ref{c:weyman}).


\begin{example}[The case of  two-column partitions]
As mentioned above a partition of $n$ of the form $\lm=(2^a, 1^c)$,
where $a+c=\ell=\ell(\lambda)$ the length of the partition, $\Il$ is
generated by $e_1(n),\ldots,e_{\ell-1}(n),$ $x_1^{\ell}, \ldots,
x_{n}^{\ell}$.
\end{example}

\begin{theorem}[The case of partially-rectangular partitions]
\label{t:reduction2} 
Let $\lm$ be a partition of $n$, and let $k>2$ be any integer.  If
columns $0,1,\ldots, k-1$ of the Young diagram have the same height,
then in the generating set for the ideal $\Il$ described in
Theorem~\ref{t:second-reduction} generators coming from columns
$2,\ldots,k$ are redundant.
\end{theorem}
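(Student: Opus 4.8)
The plan is to compare what the regular filling of $\lm$ looks like in this situation with what we already know about columns $0$ through $k-1$, and then to run exactly the kind of argument used in Example~\ref{e:counterexample}, via Proposition~\ref{p:new-presentation}. Since columns $0,1,\ldots,k-1$ all have the same height, say $h=\lm'_1=\cdots=\lm'_k$ (so $\lm_1 \ge k+1$ and $\lm_{h}\ge k+1 > \lm_{h+1}$), the numbers $b_1,\ldots,b_{k-1}$ attached to their top cells satisfy $b_1=h$ and $b_{j+1}=b_j$ for $0\le j\le k-2$ only in the degenerate reading; more precisely, from the formula $b_j=\lm'_1+\cdots+\lm'_j-j+1$ we get $b_j = jh-j+1 = j(h-1)+1$, and in particular $b_1=h$ while the relevant fact is that the \emph{first} column already forces $x_1^{b_1},\ldots,x_n^{b_1}\in \Il$ by Theorem~\ref{t:second-reduction}. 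First I would record that by Theorem~\ref{t:second-reduction} the ideal $\mathcal I_1$ generated by columns $0$ and $1$ contains $e_1(n),\ldots,e_{b_1-1}(n)$ and all $b_1$-th powers $x_i^{b_1}$.

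Next, fix a column index $j$ with $2\le j\le k$, and consider a generator $e_{b_j}(A)$ coming from the top cell of column $j$, where $|A|=n-j$. Write $B$ for the complement of $A$, so $|B|=j\le k$. By Proposition~\ref{p:new-presentation} (applicable since $b_j\le n-|A^{c}\cup\text{stuff}|$; here $b_j = j(h-1)+1 \le n-j$ because the column has height $h$ and bottom entry $n-j$, and $n-j\ge b_j$ exactly as in the proof of Theorem~\ref{t:second-reduction}), we have
\[
e_{b_j}(A) \equiv (-1)^{b_j} h_{b_j}(B) \pmod{\E_{b_j}(S)},
\]
and $\E_{b_j}(S)\subseteq \mathcal I_1 \subseteq \Il$ since $b_j\ge b_2 > b_1$ forces $e_1(n),\ldots,e_{b_j-1}(n)$… wait — not quite: $\mathcal I_1$ only contains $e_1(n),\ldots,e_{b_1-1}(n)$ among the elementary symmetrics, plus all $x_i^{b_1}$. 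So I would instead expand $h_{b_j}(B)$ into monomial symmetric polynomials $m_\mu(B)$ with $\mu\vdash b_j$ and $\ell(\mu)\le |B|=j\le k$. The key claim is: every such $m_\mu(B)$ lies in $\mathcal I_1$. Indeed $b_j = j(h-1)+1 > j(h-1) \ge (\ell(\mu))(b_1-1)$, so by pigeonhole any partition $\mu$ of $b_j$ with at most $j$ parts must have some part $\ge b_1$; hence every monomial occurring in $m_\mu(B)$ is divisible by some $x_i^{b_1}$, and so $m_\mu(B)\in (x_1^{b_1},\ldots,x_n^{b_1})\subseteq \mathcal I_1 \subseteq \Il$. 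Therefore $e_{b_j}(A)\in \Il$ already, i.e. the generators from columns $2,\ldots,k$ are redundant, which is exactly the assertion.

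The main obstacle I anticipate is the bookkeeping in the pigeonhole bound: I must make sure that for \emph{every} column $j$ in the range $2\le j\le k$ the inequality $b_j > j\,(b_1-1)$ genuinely holds, using only the hypothesis that columns $0,\ldots,k-1$ share a common height. With $b_j = j(h-1)+1$ and $b_1-1 = h-1$ this is immediate ($b_j = j(b_1-1)+1 > j(b_1-1)$), but I would double-check the edge formula for $b_j$ against Figure~\ref{f:reduction-figure}: the recursion $b_{j}=b_{j-1}+\lm'_j-1$ with $\lm'_j=h$ for $j\le k$ gives precisely $b_j = b_1 + (j-1)(h-1) = j(h-1)+1$, so the bound survives. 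A secondary point to verify carefully is the hypothesis of Proposition~\ref{p:new-presentation}, namely $b_j\le n-|B|$; this is the same inequality "$n-j \ge b_j$" established inside the proof of Theorem~\ref{t:second-reduction} for columns of height $\ge 2$, and it holds here since the columns in question genuinely have height $h\ge 2$ (they are not the height-$1$ last column). Once these two inequalities are in place the argument is a verbatim copy of Example~\ref{e:counterexample} with $b_1$ in place of $3$, so no further calculation is needed.
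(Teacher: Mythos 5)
Your route is the same as the paper's: pass from the column-$j$ generators $e_{b_j}(A)$, $|A|=n-j$, to $\pm h_{b_j}(B)$ with $B$ the complement, via Proposition~\ref{p:new-presentation}, and then use the pigeonhole bound $b_j=j(b_1-1)+1>j(b_1-1)$ to see that every monomial of degree $b_j$ in the $j$ variables of $B$ is divisible by some $x_i^{b_1}$; your detour through the monomial symmetric polynomials $m_\mu(B)$ is only a repackaging of this, which the paper applies directly to the monomials of $h_{b_j}(B)$.

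There is, however, one unresolved point, and it is exactly the one you flagged and then stepped around. The congruence in Proposition~\ref{p:new-presentation} is modulo $\E_{b_j}(S)=(e_1(n),\ldots,e_{b_j}(n))$, and redundancy requires this modulus to lie in the ideal $\mathcal{I}_1$ generated by the retained generators of columns $0$ and $1$, not merely in $\Il$. Expanding $h_{b_j}(B)$ into $m_\mu(B)$'s does nothing toward this: it only re-proves $h_{b_j}(B)\in\mathcal{I}_1$, so what you have actually established is $e_{b_j}(A)\in\mathcal{I}_1+\E_{b_j}(S)$, and your closing ``therefore $e_{b_j}(A)\in\Il$'' is not the statement to be proved (that is trivially true of a generator). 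Note also that your parenthetical claim that $\mathcal{I}_1$ contains no full elementary symmetric polynomials beyond $e_1(n),\ldots,e_{b_1-1}(n)$ confuses the listed generators with the ideal they generate: in fact $e_i(n)\in\mathcal{I}_1$ for every $i\geq b_1$, and this is precisely what rescues the argument. It is established in the Column $1$ step of the proof of Theorem~\ref{t:second-reduction} (via Parts 2 and 3 of Lemma~\ref{l:basic-lemma}); alternatively, since ${\rm char}\,k=0$, Newton's identities together with $\sum_i x_i^m\in(x_1^{b_1},\ldots,x_n^{b_1})$ for $m\geq b_1$ give $e_i(n)\in\mathcal{I}_1$ by induction on $i$. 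With that fact cited, $\E_{b_j}(S)\subseteq\mathcal{I}_1$ and your argument closes, coinciding with the paper's proof; this is what the paper's phrase ``By Theorem~\ref{t:second-reduction} and Proposition~\ref{p:new-presentation}, modulo the previous columns'' is silently invoking.
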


\begin{proof}  The regular filling of the partition $\lm$ has the 
following form.  

\newdimen\Squaresize \Squaresize=20pt
\newdimen\Thickness \Thickness=.3pt \newdimen\Correction
\Correction=5pt
\begin{eqnarray*}
\begin{matrix} 
     \Young{ \Box{1} &\Box{g+1} & \Box{2g+1} &\Box{\cdots}  &\cr 
                    \Box{2} &\Box{g+2} & \Box{2g+2} &\Box{\cdots} &  \Box{kg+1}  \cr
                    \Box{\cdots} &\Box{\cdots} & \Box{\cdots} &\Box{\cdots} &  \Box{\cdots}&  \Box{\cdots} \cr
                    \Box{g} &\Box{2g} & \Box{3g} &\Box{\cdots} &\Box{\cdots} & \Box{\cdots} \cr 
                    \Box{n} &\Box{\cdots} & \Box{\cdots} &\Box{\cdots} &\Box{\cdots} & \Box{\cdots} \cr 
               \cr} 
     \end{matrix}
\end{eqnarray*}

        By Theorem~\ref{t:second-reduction} and
        Proposition~\ref{p:new-presentation}, modulo the previous
        columns, the generators coming from Column $k$ are of the form
        $$h_{kg+1}=\sum_{a_1+\ldots+a_k=kg+1}x_{j_1}^{a_1}\ldots
        x_{j_k}^{a_k}$$ where $ 1 \leq j_1 \leq \ldots \leq j_k \leq
        n$.

         Consider a term $x_{j_1}^{a_1}\ldots x_{j_k}^{a_k}$ in the
	 sum above. We claim that for at least one power $a_i$,
	 $a_i\geq g+1$, making this monomial redundant in the presence
	 of the second column generators, which are the $(g+1)$-st
	 powers of the variables.

	 To see this, suppose $a_1\leq g, \ldots, a_k\leq g$. Then we
	 should have that $$kg+1=a_1+\ldots+a_k\leq kg$$ which is a
	 contradiction.
\end{proof}

\begin{remark}  
Drawing the Weyman diagram associated to partially rectangular
partitions considered in Theorem~\ref{t:reduction2}, one can see that
the points $(0,1),$ $(1,g+1),$ $(2,2g+1), \ldots,(k, kg+1)$ are
collinear because they can successively obtained by adding the vector
$(1,g)$. Therefore, the diagonal version of Weyman's conjecture
predicts that the generators coming from cells $(2,2g+1), \ldots,
(k,kg+1)$ are redundant. This is true: in fact these are precisely the
redundant cells according to Theorem~\ref{t:reduction2}.
\end{remark}

\begin{example}\label{e:final-reduction} Let $\lm=(5,4,4,3)$ be the 
partition in
Example~\ref{e:first-reduction}. Theorem~\ref{t:reduction2} implies
that the generating set for $\Il$ consists of the elements in the
second column in the table below (compare with
Example~\ref{e:first-reduction}), and the reduced number from the
table in Figure~\ref{f:table} is in the third column. No $7$ and
$10$-degree generators are needed in the generating set.  In this case
the prediction of the diagonal version of Weyman's conjecture was
correct: cells $(2,7)$ and $(3,10)$ are redundant; see Figure
\ref{Weymandiagram3}.
\begin{center}
\begin{tabular}{c|l|l}
{\bf Column}& {\bf Generators} & {\bf Numbers from  Figure~\ref{f:table}}\\
\hline&&\\ 
 0 & $e_1(16),e_2(16),e_3(16)$&3\\
 1 & $x_1^4, \ldots, x_{16}^4$&16\\ 
 2 & \text{redundant} & --\\
 3 & \text{redundant}& --\\
 4 & $e_{12}(12)$ & 1365\\
\hline&&\\
&{\bf Total}& 1384 
\end{tabular}
\end{center}
\end{example}

\begin{figure}
\begin{eqnarray*}
\begin{matrix} 
 1 & \underline{X} & & & & \\
 2 & \underline{X} & & & & \\    
 3 & \underline{X} & & & & \\
 4 & \underline{X} & \underline{X} & & & \\
 5 & X & X & & & \\
 6 & X & X & & & \\
 7 & X & X & \shadebox{\underline{X}} &  & \\
 8 & X & X & X &  & \\
 9 & X & X & X & & \\
 10 & X & X & X & \shadebox{\underline{X}} & \\
 11 & X & X &X &X & \\ 
 12 & X & X &X &X & \underline{X} \\ 
 13 & X & X &X &X & \\
 14 & X & X & X &  & \\
 15 & X & X & & & \\
 16 & X &  & & & \\
i= & 0 & 1 & 2 & 3 & 4
    \end{matrix}
\end{eqnarray*}
\caption{An example of a partially--rectangular partition
  $\lambda=(5,4,4,3)$.}\label{Weymandiagram3}
\end{figure}

\begin{corollary}[The case of rectangular partitions]
For a rectangular partition of $n$ of the form $\lm=(u^\ell)$, the
generating set of $\Il$ will simply be $e_1(n),\ldots,e_{\ell-1}(n),
x_1^{\ell}, \ldots, x_{n}^{\ell}$, where $n=u\,\ell.$
\end{corollary}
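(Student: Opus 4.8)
The plan is to specialize the two previously established facts — Theorem~\ref{t:second-reduction} (the principal reduction) and Theorem~\ref{t:reduction2} (the partially-rectangular reduction) — to the case $\lm=(u^\ell)$. First I would note that a rectangular partition $\lm=(u^\ell)$ of $n=u\ell$ has $\lm_1=\dots=\lm_\ell=u$ and its conjugate $\lm'=(\ell^u)$, so that $b_1=\lm'_1=\ell$, $t=\lm_2-1=u-1$ and $s=\lm_1-1=u-1$; in particular $s=t$, so in the notation of Figure~\ref{f:reduction-figure} there is no separate bottom-row label $b_s$, and the last column is just column $t$.

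Next I would apply Theorem~\ref{t:second-reduction}: the generating set coming from the columns consists of $e_1(n),\ldots,e_{b_1-1}(n)=e_1(n),\ldots,e_{\ell-1}(n)$ from column $0$, the $b_1$-th powers $x_1^{\ell},\ldots,x_n^{\ell}$ from column $1$ (using the alternative form given in the theorem, via Proposition~\ref{p:new-presentation}), and one generator $e_{b_j}(n-j)$ from each column $j=2,\ldots,t$. It remains to kill the generators from columns $2,\ldots,t$. Here is where Theorem~\ref{t:reduction2} enters: since all $u$ columns of the diagram have the same height $\ell$, the hypothesis of that theorem (columns $0,1,\ldots,k-1$ have equal height) is satisfied for $k=u-1=t$, and hence the generators coming from columns $2,\ldots,t$ are all redundant. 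Thus the surviving generating set is exactly $e_1(n),\ldots,e_{\ell-1}(n),\ x_1^{\ell},\ldots,x_n^{\ell}$, as claimed.

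I do not anticipate a real obstacle: the only thing to verify carefully is the bookkeeping identifying the parameters $\lm_1,\lm_2,\lm'_1,t,s,b_1$ for $\lm=(u^\ell)$, and confirming that $u-1>2$ is not actually needed here — if $u\le 2$ the partition has at most two columns, and then columns $2,\ldots,t$ is already empty (for $u=1$, $t=0$ and there are no column generators beyond column $0$; for $u=2$, $t=1$ and there are no columns $2,\ldots,t$), so the statement holds trivially; for $u\ge 3$ we have $k=u-1>2$ and Theorem~\ref{t:reduction2} applies directly. One should also remark, as the excerpt's discussion of $\lm=(1^n)$ in Theorem~\ref{t:second-reduction} reminds us, that the one-column case $u=1$ additionally requires $e_n(n)=x_1\cdots x_n$; but $x_1^{\ell},\ldots,x_n^\ell$ with $\ell=n$ together with $e_1(n),\ldots,e_{n-1}(n)$ do generate $e_n(n)$ modulo these (or equivalently the claimed set should be read as including it), so the uniform statement causes no trouble. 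Hence the corollary follows by simply combining the two theorems.
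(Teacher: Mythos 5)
Your argument is the intended one: the paper offers no separate proof of this corollary, treating it as an immediate specialization of Theorem~\ref{t:second-reduction} (columns $0$ and $1$ give $e_1(n),\ldots,e_{\ell-1}(n)$ and, via Proposition~\ref{p:new-presentation}, the powers $x_1^{\ell},\ldots,x_n^{\ell}$) together with Theorem~\ref{t:reduction2} (all remaining columns are redundant), which is exactly what you do. Two small bookkeeping points. First, your choice $k=u-1$ violates the hypothesis $k>2$ of Theorem~\ref{t:reduction2} when $u=3$; take $k=u$ instead (columns $0,\ldots,u-1$ of the rectangle all have the same height, and the conclusion for columns $2,\ldots,u$ covers the only column, namely column $2$, that needs to be eliminated), or observe that the $h_{2g+1}$ argument in the proof of that theorem applies verbatim to column $2$. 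Second, your remark on $u=1$ is correct but only asserted: that $e_n(n)$ lies in $(e_1(n),\ldots,e_{n-1}(n),x_1^n,\ldots,x_n^n)$ follows in characteristic $0$ from Newton's identity $p_n=e_1p_{n-1}-e_2p_{n-2}+\cdots+(-1)^{n-1}ne_n$ with $p_j=x_1^j+\cdots+x_n^j$, since $p_n\in(x_1^n,\ldots,x_n^n)$ and each $e_jp_{n-j}$ with $1\le j\le n-1$ lies in $(e_1(n),\ldots,e_{n-1}(n))$; this is worth spelling out, since Theorem~\ref{t:second-reduction} explicitly requires adding $e_n(n)$ for $\lm=(1^n)$, a caveat the corollary as stated silently drops.
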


\begin{corollary}[The case of two-row partitions]
 For a two-row partition of $n$ of the form $\lambda=(u,v)$, a generating set is given by
$e_1(n)$, $x_1^{2}, \ldots, x_{n}^{2}$, and $e_u(u)$.
\end{corollary}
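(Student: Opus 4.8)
The plan is to obtain the claimed list by specialising Theorem~\ref{t:second-reduction} to a two-row shape and then discarding the ``middle'' columns via Theorem~\ref{t:reduction2}. Write $\lambda=(u,v)$ with $u\ge v\ge 1$ and $n=u+v$, so that $\ell(\lambda)=2$ and $\lambda'=(2^{v},1^{u-v})$; thus the Young diagram of $\lambda$ has columns $0,1,\ldots,v-1$ of height $2$ and columns $v,\ldots,u-1$ of height $1$. The first step is the (routine) computation of the regular filling: the top row reads $1,2,\ldots,v$ from left to right, and the bottom row reads $n,n-1,\ldots,v+1$ from left to right. Reading off the quantities of Figure~\ref{f:reduction-figure} one gets $b_1=\lambda'_1=2$, $b_j=j+1$ for $1\le j\le t=v-1$, $s=u-1$, and the bottom entry of the rightmost column equal to $n-s=v+1$.

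Next I would invoke Theorem~\ref{t:second-reduction} directly. Column $0$ contributes only $e_1(n)$, since $b_1-1=1$. Column $1$ (present as soon as $v\ge 2$) contributes $e_{b_1}(n-1)=e_2(n-1)$, which Proposition~\ref{p:new-presentation} allows us to replace by $x_1^{2},\ldots,x_n^{2}$. Each intermediate column $j$ with $2\le j\le v-1$ contributes $e_{b_j}(n-j)=e_{j+1}(n-j)$. Finally, when $u>v$, the rightmost column contributes the single family $e_{n-s}(n-s)=e_{v+1}(v+1)$, i.e.\ all squarefree monomials of degree $v+1$. The crucial point is that columns $0,1,\ldots,v-1$ all have the same height $\lambda'_1=2$. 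Hence, provided $v\ge 3$, Theorem~\ref{t:reduction2} applies with $k=v$ (admissible, as $k>2$) and says that the generators coming from columns $2,\ldots,v$ are superfluous; in particular the intermediate families $e_3(n-2),e_4(n-3),\ldots,e_v(n-v+1)$ may all be dropped. What remains is exactly $e_1(n)$, the powers $x_1^{2},\ldots,x_n^{2}$, and the family $e_{v+1}(v+1)$ read off the last column, which is the asserted generating set.

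It remains to treat the shapes for which Theorem~\ref{t:reduction2} cannot be used, since it requires $k>2$. If $v=2$ there are no intermediate columns at all (the range $2\le j\le v-1$ is empty), so the list produced by Theorem~\ref{t:second-reduction} already is the desired one. If $v=1$ the partition is a hook: there is no column of height $\ge 2$ beyond column $0$, so Theorem~\ref{t:second-reduction} outputs just $e_1(n)$ together with the last family $e_2(2)$, and one checks (e.g.\ via Proposition~\ref{p:new-presentation} applied with $U=\{x_i\}$) that each $x_i^{2}$ already lies in $(e_1(n),e_2(2))$, so adjoining the powers is harmless and the claimed set still generates $\Il$ — this reproduces the hook case of \cite{BFR}. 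If $u=v$ the shape is rectangular, there is no ``last column'', and $\Il=(e_1(n),x_1^{2},\ldots,x_n^{2})$, with the extra family vacuously redundant. I do not expect a genuine obstacle here: the whole argument is bookkeeping on the regular filling, and the only point that demands a little care is correctly identifying the bottom entry $n-\lambda_1+1=v+1$ of the rightmost column and checking the hypothesis $k>2$ before applying Theorem~\ref{t:reduction2} to eliminate the middle columns.
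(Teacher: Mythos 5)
Your derivation follows exactly the route the paper intends (the corollary is stated without proof, as a specialization of Theorem~\ref{t:second-reduction} combined with Theorem~\ref{t:reduction2}), and the bookkeeping is right: for $\lambda=(u,v)$ one has $b_j=j+1$, the bottom entry of the rightmost column is $n-s=v+1$, the intermediate families $e_3(n-2),\ldots,e_v(n-v+1)$ are killed by Theorem~\ref{t:reduction2} with $k=v$ once $v\ge 3$, and your separate treatment of $v=2$, of the hook case $v=1$ (where the squares $x_i^2$ are adjoined harmlessly since they lie in $(e_1(n),e_2(2))$), and of the rectangle $u=v$ is correct.

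The one genuine problem is the final identification. What your argument produces is $e_1(n)$, $x_1^2,\ldots,x_n^2$, together with $e_{v+1}(v+1)$, i.e.\ all squarefree monomials of degree $v+1$; the corollary as printed asserts the family $e_u(u)$, the squarefree monomials of degree $u$, and these coincide only when $u=v+1$. You cannot simply declare your set to be ``the asserted generating set'': for $u>v+1$ the printed set does not even generate $\Il$ (for $\lambda=(3,1)$ the monomial $x_1x_2$ lies in $\I_{(3,1)}$ by Tanisaki's $k=2$ generators in Theorem~\ref{t:DP}, but the degree-two component of $(e_1(4),x_1^2,\ldots,x_4^2,e_3(3))$ is spanned by the eight polynomials $x_ie_1(4)$, $x_i^2$, and a short linear-algebra check shows $x_1x_2$ is not in that span), while for $u=v$ the elements of $e_u(u)$ are not even in $\Il$ (already $x_1\notin\I_{(1,1)}$), so your remark that the extra family is ``vacuously redundant'' is only valid for the corrected family $e_{v+1}(v+1)$. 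In other words, your proof establishes the corrected statement, with $e_{v+1}(v+1)$ in place of $e_u(u)$ (the printed $e_u(u)$ must be a misprint, as your own computation of the regular filling shows); you should state this discrepancy explicitly rather than silently identifying the two families. With that emendation the argument is complete and agrees with the paper's intended one.
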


\begin{theorem}\label{t:reduction2bis} Let $\lm$ be a partition of $n$.

\begin{enumerate}
\item If $\lm=(u^a,(u-1)^c)$ with $g=a+c$, then 
a generating set of $\Il$ is given by 
\[ e_1(n),\ldots,e_{g-1}(n), x_1^{g},\ldots, x_n^{g}.\]

\item If $\lambda=(u^a, (u-1)^c,1)$ with $u \ge3$ and $g=a+c>1$, then
$\Il$ is generated by
$$e_1(n), \ldots, e_g(n),x_1^{g+1}, \ldots, x_n^{g+1}, (x_1x_2)^g, (x_1x_3)^g, \ldots, (x_{n-1}x_n)^g.$$

\item If $\lambda=(u^a, (u-1)^c, 1,1)$ with $u \ge4$ and $g=a+c+1>2$,
then $\Il$ is generated by
 \begin{align*}
e_1(n), \ldots, e_g(n), x_1^{g+1}, \ldots, x_n^{g+1}, (x_i+x_j)(x_i  x_j)^{g-1} \text{ for all $ i \ne j $}, \text{and } (x_i x_j x_k)^{g-1} \text{ for all $i<j<k$}.
\end{align*}
\end{enumerate}
\end{theorem}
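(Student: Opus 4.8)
The plan is to deduce all three statements from Theorem~\ref{t:second-reduction} and Proposition~\ref{p:new-presentation} along a single line of argument. Fix one of the three families; write $n=|\lm|$ and let $J$ denote the ideal generated by the set displayed in the statement. In each case $\lm'$ and the regular filling are readily computed: the columns $0,1,\dots,u-2$ of $\lm$ have heights $g,g,\dots,g$ (part~1), $g+1,g,\dots,g$ (part~2), or $g+1,g-1,\dots,g-1$ (part~3), the last column (number $u-1$) has height $a$, and the top entries of columns $1,2,\dots,u-1$ are $b_1=\lm'_1$ together with an explicit affine formula $b_j$ in $j$ for $2\le j\le u-1$. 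By Theorem~\ref{t:second-reduction} a generating set of $\Il$ is obtained by keeping $e_1(n),\dots,e_{b_1-1}(n)$ from column~$0$, the set $\{x_1^{b_1},\dots,x_n^{b_1}\}$ from column~$1$, and for each $2\le j\le u-1$ the single set $e_{b_j}(n-j)$ (when the last column has height one, $e_{b_{u-1}}(n-u+1)$ is precisely the set of all square-free monomials of degree $n-\lm_1+1$). Columns~$0$ and~$1$ contribute exactly the elementary polynomials and pure powers appearing among the claimed generators, and — as in the proof of Theorem~\ref{t:second-reduction} — they already force $e_i(n)\in J$ for every $i$, so $(e_1(n),\dots,e_i(n))\subseteq J$ for all $i$. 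Hence everything reduces to showing that the generators from columns $j\ge2$ lie in $J$.

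To this end fix $2\le j\le u-1$, a set $A\subseteq\{x_1,\dots,x_n\}$ with $|A|=n-j$, and its complement $B$, $|B|=j$. Since $e_{b_j}(n-j)$ is an honest partially symmetric polynomial, $b_j\le n-j$, so Proposition~\ref{p:new-presentation} gives $e_{b_j}(A)\equiv(-1)^{b_j}h_{b_j}(B)$ modulo $(e_1(n),\dots,e_{b_j}(n))\subseteq J$, and it suffices to prove $h_{b_j}(B)\in J$. In $h_{b_j}(B)$ discard every monomial in which some variable occurs with exponent $\ge g+1$ (for part~1, $\ge g$): these are divisible by a pure power lying in $J$. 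For a surviving monomial $\prod_{x\in B}x^{a_x}$ (so $\sum a_x=b_j$, $a_x\le g$) set the deficiency $d_x=g-a_x\ge0$; then $\sum_{x\in B}d_x=jg-b_j=:\sigma_j$, and a direct computation gives $\sigma_j=j-1$ in part~1, $\sigma_j=j-2$ in part~2, and $\sigma_j=2j-3$ in part~3. In part~1 some $d_x$ must be $0$ (else $\sigma_j\ge j$), so the monomial is divisible by $x_i^g\in J$; this finishes part~1. In part~2 (where $j\ge3$) at least two $d_x$ must be $0$ (else $\sigma_j\ge j-1$), so the monomial is divisible by some $(x_ix_j)^g\in\Il$ — and $(x_ix_j)^g$ does lie in $\Il$, being the unique surviving monomial of the column-$2$ representative $h_{b_2}(\{x_i,x_j\})$, where $\sigma_2=0$; this finishes part~2.

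Part~3 is the heart of the proof. Here $\sigma_j=2j-3$, so at least two variables in each surviving monomial have $d_x\le1$, i.e.\ occur with exponent $\ge g-1$ (otherwise $\ge j-1$ variables would have $d_x\ge2$, forcing $\sigma_j\ge 2j-2$). If three or more variables occur with exponent $\ge g-1$, the monomial is divisible by some $(x_ix_jx_k)^{g-1}\in J$. Otherwise exactly two variables, say $x_p$ and $x_q$, have $d\le1$; the remaining $j-2$ variables contribute at least $2(j-2)$ to $\sigma_j=2j-3$, hence $d_p+d_q\le1$, so either $a_p=a_q=g$ or $\{a_p,a_q\}=\{g,g-1\}$. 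If $a_p=a_q=g$, write the monomial as $x_p^{g}x_q^{g}N$ with $N$ a monomial in the remaining variables; the polynomial identity $x_p^{g}x_q^{g}N=(x_p+x_q)(x_px_q)^{g-1}x_pN-x_p^{g+1}x_q^{g-1}N$ exhibits it as an element of $J$. If $\{a_p,a_q\}=\{g,g-1\}$, the monomial $x_p^{g}x_q^{g-1}N$ need not lie in $J$ by itself, but its conjugate $x_p^{g-1}x_q^{g}N$ also occurs in $h_{b_j}(B)$ (which is symmetric in the variables of $B$), and $x_p^{g}x_q^{g-1}N+x_p^{g-1}x_q^{g}N=(x_p+x_q)(x_px_q)^{g-1}N\in J$; so these monomials of $h_{b_j}(B)$ cancel, modulo $J$, in conjugate pairs. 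Hence $h_{b_j}(B)\in J$, which shows columns $j\ge2$ are redundant. Finally one checks that the two non-elementary families of claimed generators lie in $\Il$: $(x_i+x_j)(x_ix_j)^{g-1}$ is, up to the pure powers, the column-$2$ representative $h_{b_2}(\{x_i,x_j\})$ (where $\sigma_2=1$), and $(x_ix_jx_k)^{g-1}$ is the one surviving monomial of the column-$3$ representative $h_{b_3}(\{x_i,x_j,x_k\})$ not already accounted for by the column-$2$ generators, via the case analysis above.

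The routine work left to do is the explicit computation of $\lm'$, of the regular filling, and of each $b_j$ in the three families — including the boundary cases $u=3$ in part~2 and $u=4$ in part~3, and the height-one last column when $a=1$, which fits the scheme above because $e_{b_{u-1}}(n-u+1)\equiv\pm h_{b_{u-1}}(B)$ with $|B|=u-1$ and the same value of $b_{u-1}$ — together with the verification that the hypotheses $u\ge3,\ g>1$ in part~2 and $u\ge4,\ g>2$ in part~3 (and $a\ge1$ throughout) are exactly what keeps $\lm$ away from the two-column, hook, and $(1^n)$ cases handled earlier. The one genuinely non-formal step is the pairing in part~3: because the ideal generated by the binomials $(x_i+x_j)(x_ix_j)^{g-1}$ is not a monomial ideal one cannot argue monomial by monomial, and it is the symmetry of $h_{b_j}(B)$ in the variables of $B$ that forces the ``bad'' monomials to occur in cancelling conjugate pairs — this is the structural point one must get right.
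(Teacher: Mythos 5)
Your proposal is correct and follows essentially the same route as the paper's proof: reduce each column-$j$ generator via Proposition~\ref{p:new-presentation} to $h_{b_j}(B)$ on the complementary variables, discard monomials killed by the pure powers, and run the same exponent-counting and case analysis, including the identity for the $x_p^gx_q^g$ case and the conjugate-pair cancellation for the $\{g,g-1\}$ case in part~3. Your deficiency bookkeeping $\sigma_j$ and the direct counting for part~1 (which the paper instead cites from Theorem~\ref{t:reduction2}) are only cosmetic repackagings of the paper's argument.
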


\begin{proof}
\begin{enumerate}
\item This is an easy consequence of Theorem~\ref{t:reduction2}.
 
\item The regular filling of $(u^a, (u-1)^c,1)$ will be of the form:

\newdimen\Squaresize \Squaresize=21pt \newdimen\Thickness
\Thickness=.3pt \newdimen\Correction \Correction=5pt

\begin{eqnarray*}
\begin{matrix} 
     \Young{ \Box{1} \cr 
                    \Box{2} &\Box{g+1} & \Box{2g} &\Box{3g-1} &  \Box{\cdots} & \Box{\mystack{lg-l}{+2}}  &  \Box{\cdots} \cr
                    \Box{\cdots} &\Box{\cdots} & \Box{\cdots} &\Box{\cdots} &  \Box{\cdots}&  \Box{\cdots}&  \Box{\cdots} & \cr
                                        \Box{\cdots} &\Box{\cdots} & \Box{\cdots} &\Box{\cdots} &  \Box{\cdots}&  \Box{\cdots}&  \Box{\cdots} & 
\Box{\mystack{(u-1)g}{-u+3}} \cr
                    \Box{g} &\Box{\cdots} & \Box{\cdots} &\Box{\cdots} &\Box{\cdots} & \Box{\cdots} &  \Box{\cdots} &  \Box{\cdots} \cr 
                    \Box{n} &\Box{n-1} & \Box{n-2} &\Box{n-3} &\Box{\cdots} & \Box{n-l}&  \Box{\cdots} &  \Box{\mystack{n-u}{+1}}  \cr 
               \cr} 
     \end{matrix}
\end{eqnarray*}  

Columns $0$ and $1$ clearly provide the generators $e_1(n), \ldots,
e_g(n),x_1^{g+1}, \ldots x_n^{g+1}$. By
Proposition~\ref{p:new-presentation}, Column $2$ provides generators
of the form $$h_{2g}=\sum_{a+b=2g}x_i^ax_j^b$$ for $1\leq i < j \leq
n$. Since we already have $x_i^{g+1}$ and $x_j^{g+1}$ in the ideal,
this sum reduces to the monomial $x_i^gx_j^g$. Hence the third column
provides the remaining generators $(x_1x_2)^g, (x_1x_3)^g, \ldots,
(x_{n-1}x_n)^g$.

It remains to show that the generators coming from Columns $3,\ldots,
u-1$ are redundant. Let $l$ be any integer such that $3 \leq l \leq
u-1$. The generators from Column $l$, by
Proposition~\ref{p:new-presentation} and the fact that we have all $(g+1)$-st powers of the variables in the ideal, are of the form
$$h_{lg-l+2}=\sum_{\mysumstack{a_1+\cdots+a_l=lg-l+2}{a_1,\ldots,a_l\leq
g}}x_{i_1}^{a_1}\ldots x_{i_l}^{a_l}$$ where $1\leq i_1< i_2< \ldots
<i_l\leq n$, and in each monomial $x_{i_1}^{a_1}\ldots
x_{i_l}^{a_l}$ at most one of the powers $a_u$ is equal to
$g$. For such a monomial in the sum, we therefore have
$$a_1+\cdots+a_l\leq (l-1)(g-1)+g=lg-l+1 \Longrightarrow lg-l+2 \leq
lg-l+1$$ which is a contradiction. So there is no generator from
Column $l$ if $l \geq 3$.

\item The regular filling of $(u^a, (u-1)^c, 1,1)$ will be of the
following form. \newdimen\Squaresize \Squaresize=21pt \newdimen\Thickness
\Thickness=.3pt \newdimen\Correction \Correction=5pt
\begin{eqnarray*}
\begin{matrix} 
     \Young{ \Box{1} \cr
                     \Box{2} \cr 
                    \Box{3} &\Box{g+1} & \Box{2g-1} &\Box{3g-3} &  \Box{\cdots} & \Box{\mystack{lg-2l}{\scriptscriptstyle{+3}}}  &  \Box{\cdots} \cr
                    \Box{\cdots} &\Box{\cdots} & \Box{\cdots} &\Box{\cdots} &  \Box{\cdots}&  \Box{\cdots}&  \Box{\cdots} \cr
                    \Box{g} &\Box{\cdots} & \Box{\cdots} &\Box{\cdots} &\Box{\cdots} & \Box{\cdots} &  \Box{\cdots} &\Box{\mystack{(u-1)g}{-2u+5}}\cr 
                    \Box{n} &\Box{\cdots} & \Box{\cdots} &\Box{\cdots} &\Box{\cdots} & \Box{\cdots}&  \Box{\cdots} &\Box{} \cr 
               \cr} 
     \end{matrix}
\end{eqnarray*}  

Again Columns $0$ and $1$ provide the generators $e_1(n), \ldots,
e_g(n),x_1^{g+1}, \ldots x_n^{g+1}$.

By Proposition~\ref{p:new-presentation}, Column $2$ provides
generators of the form $$h_{2g-1}=\sum_{a+b=2g-1}x_i^ax_j^b$$ for $1\leq i < j
\leq n$. Since we already have $x_i^{g+1}$ and $x_j^{g+1}$ in the
ideal, we can additionally assume that $a,b \leq g$ for each monomial
$x_i^ax_j^b$ in the sum, and so at least one of $a$ or $b$ would have
to be $g-1$ and the other $g$. This produces a generator of the form
$x_i^gx_j^{g-1}+ x_i^{g-1}x_j^g=(x_i+x_j)(x_i x_j)^{g-1}$.

Similarly, Column $3$ will produce generators of the form
$$h_{3g-3}=\sum_{a+b+c=3g-3}x_i^ax_j^bx_k^c$$ for $1\leq i < j <k \leq n$. Once more, we can assume that $a,b,c \leq g$, which reduces the sum above to
$$\begin{array}{l}
x_i^{g-1}x_j^{g-1}x_k^{g-1}+ 
x_i^{g-2}(x_j^{g}x_k^{g-1}+x_j^{g-1}x_k^{g})+ 
x_j^{g-2}(x_i^{g}x_k^{g-1}+x_i^{g-1}x_k^{g})+
x_k^{g-2}(x_i^{g}x_j^{g-1}+ x_i^{g-1}x_j^{g})\\
\\
= x_i^{g-1}x_j^{g-1}x_k^{g-1}+ 
x_i^{g-2}x_j^{g-1}x_k^{g-1}(x_j+ x_k)+
x_j^{g-2}x_i^{g-1}x_k^{g-1}(x_i+ x_k)+
x_k^{g-2}x_i^{g-1}x_j^{g-1}(x_i+ x_j).
  \end{array}
$$

The last three summands are in the ideal already (coming from Column
$2$), so the generators from Column $3$ can all be written as $
x_i^{g-1}x_j^{g-1}x_k^{g-1}$ for $1\leq i <j<k \leq n$.

We now need to show that generators coming from Column $l$, where $4
\leq l \leq u-1$ are redundant. The generators from Column $l$, by
Proposition~\ref{p:new-presentation} and the fact that we have all
$(g+1)$-st powers of the variables in the ideal, are of the form
$$h_{lg-2l+3}=\sum_{\mysumstack{a_1+\cdots+a_l=lg-2l+3}{a_1,\ldots,a_l\leq
g}}x_{i_1}^{a_1}\ldots x_{i_l}^{a_l}$$ where $1\leq i_1< i_2< \ldots
<i_l\leq n$.


Suppose that $M=x_{i_1}^{a_1}\ldots x_{i_l}^{a_l}$ is a monomial in this
sum.  

If one of the powers, say $a_1$, is equal to $g$, then we must have
another power among $a_2,\ldots, a_l$ that is $g$ or
$g-1$. If not, all of $a_2, \ldots,a_l$ are $\leq g-2$, and we have
$$lg-2l+3=a_1+\cdots+a_l\leq g+(l-1)(g-2)= lg-2l+2$$ which is a
contradiction. So there is at least another power, say $a_2$, such
that $a_2\geq g-1$. 

\begin{itemize} 

\item  $a_1=a_2=g$. In this case, we can write
$$\begin{array}{ll}
x_{i_1}^{g}x_{i_2}^{g}x_{i_3}^{a_3}x_{i_4}^{a_4}...x_{i_l}^{a_l}&=
(x_{i_1}+x_{i_2})(x_{i_1}x_{i_2})^{g-1} [ 1/2 x_{i_2}x_{i_3}^{a_3}x_{i_4}^{a_4}...x_{i_l}^{a_l}+ 1/2
x_{i_1}x_{i_3}^{a_3}x_{i_4}^{a_4}...x_{i_l}^{a_l}]\\
&-1/2x_{i_1}^{g+1}x_{i_2}^{g-1}x_{i_3}^{a_3}x_{i_4}^{a_4}...x_{i_l}^{a_l}
-1/2x_{i_1}^{g-1}x_{i_2}^{g+1}x_{i_3}^{a_3}x_{i_4}^{a_4}...x_{i_l}^{a_l}
  \end{array}$$

All the terms on the right-hand side are already in the ideal, and
hence so is
$x_{i_1}^{g}x_{i_2}^{g}x_{i_3}^{a_3}x_{i_4}^{a_4}...x_{i_l}^{a_l}$.

\item $a_1=g$ and $a_2=g-1$. In this case, there is another monomial
$M'=x_{i_1}^{g-1}x_{i_2}^{g}x_{i_3}^{a_3}x_{i_4}^{a_4}...x_{i_l}^{a_l}$
in the sum as well, and there is exactly one copy of $M$ and one copy
of $M'$ in the sum. Now we have
$$M+M'=(x_{i_1}+x_{i_2})x_{i_1}^{g-1}x_{i_2}^{g-1}(x_{i_3}^{a_3}x_{i_4}^{a_4}...x_{i_l}^{a_l}).$$

So each such monomial $M$ is paired with a unique monomial $M'$ in the
sum, and their sum is already in the ideal.
\end{itemize}

Now assume that all the powers  $a_1, \ldots,a_l$ are  $\leq g-1$.
If $l-2$ of the powers $a_1, \ldots,a_l$ are $\leq g-2$, then we
have $$lg-2l+3=a_1+\cdots+a_l\leq (l-2)(g-2) + 2(g-1)= lg-2l+2$$ which
is a contradiction. So there are at least 3 powers among $a_1,
\ldots,a_l$ that are equal to $g-1$. But then the monomial
$x_{i_1}^{a_1}\ldots x_{i_l}^{a_l}$ is already in $\Il$, because it is
a multiple of a generator coming from Column $3$.
\end{enumerate}
\end{proof}

\begin{corollary}
Suppose that the first $l+1$ columns of a partition $\lambda$ belong
to one of the three families of shapes described in
Theorem~\ref{t:reduction2bis}. Then
\begin{itemize}
\item [a)] In cases 1 and 2, the generators coming from Columns
  $3,\ldots, l$ are redundant. For Columns $0,1,2$ we can use the
  generators described in Theorem~\ref{t:reduction2bis}.

\item[b)] In Case 3, the generators coming from columns $4,\ldots, l$
are redundant. For Columns $0,1,2,3$ we can use the generators described
in Theorem~\ref{t:reduction2bis}.
\end{itemize}
\end{corollary}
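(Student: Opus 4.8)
The plan is to observe that the arguments proving Theorem~\ref{t:reduction2} and Theorem~\ref{t:reduction2bis} are entirely \emph{local}: the redundancy of the generator read off a column $j$ is deduced using only the generators attached to columns $0,1,\dots,\min(j,3)$ together with a pigeonhole bound on the exponents of the monomials appearing in a single complete symmetric polynomial $h_{b_j}$, and no step ever inspects a column of $\lambda$ lying to the right of column $j$. Hence these arguments survive when the tail of a family partition is replaced by an arbitrary completion. First I would recall, via Theorem~\ref{t:second-reduction}, that $\Il$ is generated by $e_1(n),\dots,e_{b_1-1}(n)$ from column $0$ together with the single set $e_{b_k}(n-k)$ for each column $k\geq1$; since $\lambda\neq(n)$, the proof of Theorem~\ref{t:second-reduction} already puts all of $e_1(n),\dots,e_n(n)$ in $\Il$, so by Proposition~\ref{p:new-presentation} each set $e_{b_k}(n-k)$ may be replaced, modulo $\Il$, by $(-1)^{b_k}h_{b_k}(k)$. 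This is legitimate because columns $1,\dots,l$ have height $\geq2$ in each of the three shapes (immediate from the hypotheses together with $\lambda\neq(n)$, using $g>1$ in case~2 and $g>2$ in case~3), whence $b_k\leq n-k$.

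Next I would record that $b_k=\lambda'_1+\cdots+\lambda'_k-k+1$ depends only on the heights $\lambda'_1,\dots,\lambda'_l$ of the first $l+1$ columns. By hypothesis these heights coincide with those of the relevant member of the families in Theorem~\ref{t:reduction2bis}, so $b_1,\dots,b_l$ take exactly the values occurring in that theorem's proof (case~1 being the partially-rectangular situation of Theorem~\ref{t:reduction2}). I would then rerun the corresponding part of that proof verbatim. The reductions of columns $1$ and $2$ — and, in case~3, of column $3$ — to the explicit generators listed in Theorem~\ref{t:reduction2bis} use only the power generators $x_1^{b_1},\dots,x_n^{b_1}$ of column $1$ and, in case~3, the column-$2$ generators $(x_ix_j)^g$; all of these live among columns $0,\dots,3\leq l$. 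For each $j$ with $3\leq j\leq l$ in cases~1--2 (respectively $4\leq j\leq l$ in case~3), the counting argument of Theorem~\ref{t:reduction2bis} shows that every monomial $x_{i_1}^{a_1}\cdots x_{i_j}^{a_j}$ of $h_{b_j}$ either has an exponent large enough to be divisible by a column-$1$ power generator, or (in case~3) forces two exponents making it pair with another monomial of the sum into a multiple of a column-$2$ generator, or three exponents making it a multiple of a column-$3$ generator. Thus the generators from columns $3,\dots,l$ (respectively $4,\dots,l$) are redundant, and for columns $0,1,2$ (respectively $0,1,2,3$) one may use the generators of Theorem~\ref{t:reduction2bis}; the columns of $\lambda$ beyond $l$, about which the corollary asserts nothing, are left to the earlier reductions.

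The hard part will be not the algebra — which is already done in Theorem~\ref{t:reduction2bis} — but making the locality claim airtight: one must check that nowhere in the proof of Theorem~\ref{t:reduction2bis} is the fact that the family partition \emph{ends} at column $u-1$ secretly used, e.g. that rewriting a monomial of column $j$ never calls on a square-free monomial coming from the last column of the original family. A careful reread confirms this, since every identity invoked there is of the schematic form ``expand $h_{b_j}$, then discard the terms divisible by $x_i^{g+1}$, by $x_i^gx_j^g$, or by $(x_ix_jx_k)^{g-1}$,'' all of which belong to columns $0$ through $3$. With this verified, the displayed reductions carry over word for word and the corollary follows.
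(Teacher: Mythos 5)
Your proposal is correct and follows exactly the route the paper intends: the corollary is stated without a separate proof precisely because the redundancy arguments in Theorems~\ref{t:reduction2} and~\ref{t:reduction2bis} are local, using only the values $b_1,\dots,b_l$ (determined by the heights of columns $0,\dots,l$) and generators from columns $0,1,2$ (resp.\ $0,\dots,3$), via Proposition~\ref{p:new-presentation} and exponent counting. Your explicit checks (all $e_i(n)$ lie in the ideal of columns $0$--$1$, $b_k\leq n-k$, and no step of the theorem's proof invokes columns beyond $l$) are exactly the verification needed, so nothing is missing.
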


\begin{counterexample}[Counterexamples to the diagonal version of Weyman's conjecture] \label{counterdiagonal}
The two infinite families of partitions described in parts 2 and 3 of
Theorem~\ref{t:reduction2bis} are counterexamples to the diagonal
version of Weyman's conjecture. Indeed, according to it, all
generators coming from each of the top cells of their diagrams should
be necessary because for $k>0$, the top cells are collinear (for the
first family we can move from one top cell to the next one by adding
the vector $(1,g-1)$, and for the second family, by adding the vector
$(1,g-2)$).  But the line containing those points does not pass
through $(0,1)$.  Instead it passes through $(0,2)$ for the first
family, and through $(0,3)$ for the second family.

Let $\lambda$ be a partition such that its first $l$ columns belong to
one of the two families of shapes described above, with $l > 2$ for
the first family and $l> 3$ for the second one. The preceding
corollary shows that the generators coming from Column $k$, with $3<k
\leq l$ are redundant. We conclude that each such $\lambda$ is a
counterexample to the diagonal version of Weyman's conjecture. A first
counterexample was shown in Counterexample~\ref{primocontro}.
\end{counterexample}

\todo{(sara) The following example said that ``Using Macaulay2 we see
  that the cell containing 7 is redundant.'' But this also comes from
  our theorem. I have mentioned it in the text. If you agree (please
  check for yourselves!) then please remove this box}

\begin{example}
Consider the partition $(5,5,1,1)$ that fits inside one of the
families in Theorem~\ref{t:reduction2bis}. As proved in that theorem,
the cell containing 7 is redundant. Translated into the Weyman
diagram, this means that the ${\underline{X}}$ in position $(4,7)$
is redundant (see Figure~\ref{figurequestion0}).
\begin{figure}
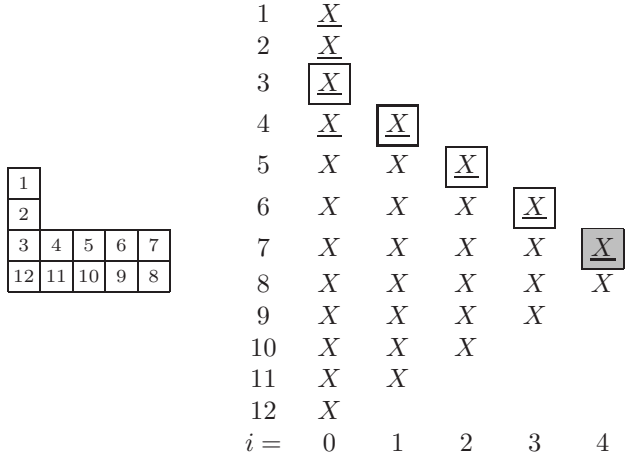

\newdimen\Squaresize \Squaresize=12pt \newdimen\Thickness
\Thickness=.3pt \newdimen\Correction \Correction=5pt
\begin{eqnarray*}
\begin{matrix} 
     \Young{ \Box{1} \cr
                     \Box{2} \cr 
                    \Box{3} &\Box{4} & \Box{5} &\Box{6} &  \Box{7}   \cr
                    \Box{12} &\Box{11} & \Box{10} &\Box{9} &  \Box{8} \cr
               \cr} 
     \end{matrix}
\hspace{1cm}
\begin{matrix} 
 1 & \underline{X} & & & &  &\\
 2 & \underline{X} & & & &  &\\    
 3 & \collbox{\underline{X}} & & & & & \\
 4 & \underline{X} & \collbox{\underline{X}} & & & &\\
 5 & X &X  & \collbox{\underline{X}}& & &\\
 6 & X & X &X  &\collbox{\underline{X}} & & \\
 7 & X & X & X & X & \collshadebox{\underline{X}} &\\
 8 & X & X & X  &X  & X &\\
 9 & X & X &  X  & X & &\\
 10 & X & X &X &  &  & \\
 11 & X & X & &  &   &  \\ 
 12 & X & & &   &  &  \\ 
i= & 0 & 1 & 2 & 3 & 4  
    \end{matrix}
\end{eqnarray*}
\caption{The regular filling and the Weyman diagram of $\lambda=(5,5,1,1)$.}
\label{figurequestion0}
\end{figure}

The following table, computed with Macaulay2, confirms our prediction that 
 the $275$ degree $7$ generators that should be in the generating set 
 according to the diagonal version of the conjecture, are not needed.
\begin{center}
 \begin{tabular}{c|l}
{\bf Degrees} &   {\bf Minimal number of generators}  \\
\hline& \\ 
1, 2, 3 & 1 \text{in each degree} \\
4 & 12  \\
5 & 54  \\
6 & 154  \\
7 & redundant \\
\end{tabular}
\end{center}
\end{example}

Theorems~\ref{t:reduction2}  and~\ref{t:reduction2bis} can be
reformulated in a suggestive geometrical way as special instances of
the following statement.
\begin{question} \label{lastquestion}
Let $\lambda$ be a partition and draw the Weyman diagram of
$\lambda$. If the $X's$ at the top of columns $1,2,\ldots, r$ are
collinear, and the line containing them passes through the point $(0,k)$,
then are the generators coming from columns $k+1,\ldots,r$ redundant ?

We have evidence that suggests that this statement is true: it was
proven to be true when $k=1$ in Theorem~\ref{t:reduction2}, for $k=2$
in Theorem~\ref{t:reduction2bis} Part 1, and for $k=3$ in Theorem
\ref{t:reduction2bis} Part 2, (see Figure~\ref{figurequestion0}: the collinear $X$'s have been surrounded).  For $k=4$, we used Macaulay2 to verify whether
the statement is still true for the smallest possible member of this
family, the partition (6,5,1,1,1) (see Figure~\ref{figurequestion}).  As predicted,
all degree $9$ generators are redundant.

\begin{figure}
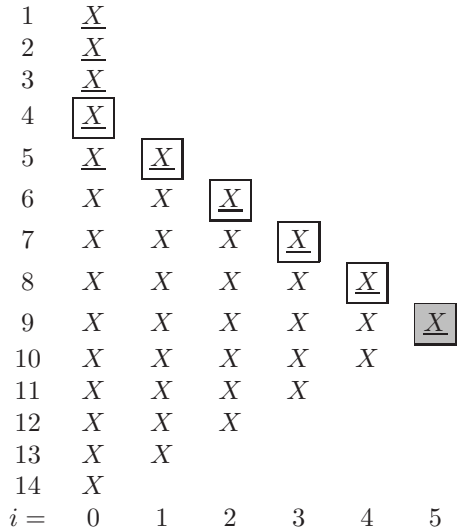

\begin{eqnarray*}
\begin{matrix}
 1 & \underline{X} & & & &  &\\
 2 & \underline{X} & & & &  &\\   
 3 & \underline{X} & & & & & \\
 4 & \collbox{\underline{X}} &  & & & &\\
 5 & \underline{X} & \collbox{\underline{X}} & & & &\\
 6 & X & X &\collbox{\underline{X}}  & & & \\
 7 & X & X & X&\collbox{\underline{X}} & &\\
 8 & X & X &X  &X  &\collbox{\underline{X}} &\\
 9 & X & X &X  &X  &X & \collshadebox{\underline{X}}\\
 10 & X & X & X &X  & X  &\\
 11 & X & X &X &X  &  & \\
 12 & X & X &  X & &   &  \\
 13 & X &X  & &  &  &  \\
 14 & X &  & &  &  & \\
i= & 0 & 1 & 2 & 3 & 4 & 5
\end{matrix}
    \end{eqnarray*}
    \caption{An evidence regarding the statement in Question
\ref{lastquestion} for $\lambda=(6,5,1,1,1)$}
\label{figurequestion}
\end{figure}

\begin{center}
    \begin{tabular}{l|l}
    {\bf Degrees} &{\bf Minimal number of generators}\\
    \hline& \\ 
    1, 2, 3, 4 & 1 in each degree \\
    5 & 14\\
    6 & 77\\
    7 & 273\\
    8 & 637\\
    9 & \text{redundant}
    \end{tabular}
\end{center}
\end{question}

\idiot{ An example of the Weyman diagram of a partition in the first family.
Here $g=3$ $\lambda=(5,5,5,1)$ and $n=14$
\begin{eqnarray*}
\begin{matrix} 
 1 & X & & & &  &\\
 2 & \collbox{X} & & & &  &\\    
 3 & X & & & & & \\
 4 & X & \collbox{X} & & & &\\
 5 & X & X & & & &\\
 6 & X & X & \collbox{X} & & & \\
 7 & X & X & X &  & &\\
 8 & X & X & X & \collbox{X} & &\\
 9 & X & X & X & X & &\\
 10 & X & X &X &X & \collbox{X} & \\
 11 & X & X &X &X & X &  \\ 
 12 & X & X &X & X & X &  \\ 
 13 & X & X  & X & X & &\\
 14 & X & X&X &  & &\\  
 15 & X & X& &  & &\\
 16 & X & & &  & &\\
i= & 0 & 1 & 2 & 3 & 4 & 5 
    \end{matrix}
\end{eqnarray*}
Note that again we have a phenomena of collinearity BUT 
the line does not pass through $(0,1)$ but by $(0,2)$. 
Note that in this case, we need generators coming from columns $0,1,$ and $2$.
 }

\idiot{ An example of the Weyman diagram of a partition in the second family.
Here $g=5$ $\lambda=(4,4,4,4,1,1)$ and $n=18$
\begin{eqnarray*}
\begin{matrix} 
 1 & X & & & &  &\\
 2 & X & & & &  &\\    
 3 &  \collbox{X}  & & & & & \\
 4 & X &  & & & &\\
 5 & X &  & & & &\\
 6 & X &  \collbox{X}  &  & & & \\
 7 & X & X & &  & &\\
 8 & X & X &  &  & &\\
 9 & X & X &  \collbox{X}  &  & &\\
 10 & X & X &X &  &  & \\
 11 & X & X &X &  &   &  \\ 
 12 & X & X &X &  \collbox{X}  &  &  \\ 
 13 & X & X  & X & X & &\\
 14 & X & X&X &  X& &\\
 15 & X & X& X & X  &  &\\
 16 & X & X & X &  & &\\
 17 & X & X& &  & &\\
 18 & X & & &  & &\\
    \end{matrix}
\end{eqnarray*}
Note that again we have a phenomena of collinearity BUT 
the line does not pass through $(0,1)$ nor $(0,2)$ but by $(0,3)$. 
Note that in this case, we need generators coming from columns $0,1,2$ and $3$.
 }

\subsection{Weyman's original conjecture}

To finish our work, we focus our attention at the original conjecture
of Weyman.  It seems plausible that those partitions that give
counterexamples to the diagonal version of Weyman's conjecture are
also counterexamples to Weyman's original conjecture. We used
Macaulay2 to verify if this was the case for the smallest shape
in the families described in Counterexample~\ref{counterdiagonal}.

\begin{counterexample}[Counterexample to Weyman's original conjecture] 
Consider the partition $(4,3,1)$ whose Weyman diagram is represented in
Figure~\ref{figurecounter}.  The points $(1,3), (2,4)$ and $(3,5)$ are
collinear, but the line that contains them does not pass through
$(0,1)$. So according to Weyman's conjecture, all these cells 
contribute generators to a minimal generating set of
$\mathcal{J}_{(4,3,1)}$.  However, Theorem~\ref{t:reduction2bis}
suggests that the generators coming from cell $(3,5)$ may be
redundant.
\begin{figure}
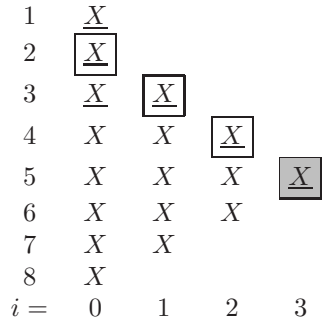

\begin{eqnarray*}
\begin{matrix} 
 1 & \underline{X} & & & &  &\\
 2 & \collbox{\underline{X}} & & & &  &\\    
 3 & \underline{X} & \collbox{\underline{X}}& & & & \\
 4 & X &X  &\collbox{\underline{X}}& & &\\
 5 & X & X &X & \collshadebox{\underline{X}}& &\\
 6 & X & X &X  & & & \\
 7 & X & X & && &\\
 8 & X &  & && &\\
i= & 0 & 1 & 2 & 3
    \end{matrix}
\end{eqnarray*}
\caption{ A counterexample to Weyman's original conjecture:  $ (4,3,1)$. \label{figurecounter} }
\end{figure}
%

Using Macaulay 2, we computed the minimal generating set for
$\mathcal{J}_{(4,3,1)}$ and verified that this is indeed the case. We
conclude that $(4,3,1)$ is a counterexample to Weyman's original
conjecture.

\begin{center}
 \begin{tabular}{c|l|lll}
{\bf Degrees} &  {\bf Weyman's conjecture} & {\bf Minimal number of generators}  \\
\hline& \\ 
1 & 1  &1\\
 2 & 1 &1  \\ 
 3 & 64 & 64  \\
4 & 720 & 720 \\
 5 & 2352 & redundant \\
\hline&&\\
{\bf Total}& 3138 & 786 
\end{tabular}
 \end{center}

To summarize, in this particular case, Weyman's conjecture predicts
that we need 3138 generators, but only $786$ of them are really
necessary.

Unfortunately, even large servers were not able to handle slightly
larger examples, so at this point we do not know if other partitions
in the families described earlier are counterexamples to Weyman's
original conjecture. 
\end{counterexample}

We end the paper with a natural question.
\begin{question}
Does the statement of Question~\ref{lastquestion} hold for $\mathcal{J}_\lambda$ ?

\end{question}

\section*{Acknowledgments}

We wish to thank Jerzy Weyman for many interesting conversations and
suggestions, as well as his interest in our project.  At the same time
that we were working on this project, he showed independently that
$(4,3,1)$ is indeed a counterexample to his Conjecture 5.1 using
geometric reasoning \cite{W3}. Moreover, he has shown that Conjecture
5.1 holds for all the other partitions of $n \le 9$ except for the
three partitions of $9$ belonging to the shapes described in
Counterexample~\ref{counterdiagonal}.  We also wish to Mark Shimozono
pointing out some references and Emmanuel Briand for his help during
this project.

\idiot{It will be ideal to use M2 to see what happens for partitions
$(4,4,1), (5,3,1), (4,3,1,1)$

--Crashed! Can't handle 9 variables on 24G server!}


\end{document}